\newtheorem{theorem}{Theorem}[section]
\newtheorem{lemma}[theorem]{Lemma}
\newtheorem{conjecture}[theorem]{Conjecture}
\theoremstyle{definition}
\newcommand{\cC}{\mathcal{C}}
\newcommand{\cJ}{\mathcal{J}}
\newcommand{\cL}{\mathcal{L}}
\newcommand{\cP}{\mathcal{P}}
\newcommand{\cU}{\mathcal{U}}
\DeclareMathOperator{\PG}{PG}
\DeclareMathOperator{\GF}{GF}
\newcommand{\del}{\setminus \!}
\newcommand{\con}{/}
\newcommand{\gbinom}[3]{{{#1} \brack {#2}}_{#3}}
\newcommand{\qbinom}[2]{\gbinom{#1}{#2}{q}}
\newcommand{\tqbinom}[2]{\genfrac{[}{]}{0pt}{1}{#1}{#2}_q}
\author{Jim Geelen}
\author{Peter Nelson}
\address{Department of Combinatorics and Optimization,
University of Waterloo, Waterloo, Canada}
\thanks{ This research was partially supported by a grant from the
Office of Naval Research [N00014-12-1-0031].}
\dedicatory{This paper is dedicated to the memory of Michel Las Vergnas.}
\title[The number of lines in a matroid]{The Number of Lines in a Matroid with no $U_{2,n}$-minor}
\begin{document}
\begin{abstract}
We show that, if $q$ is a prime power at most $5$, then every rank-$r$ matroid with no $U_{2,q+2}$-minor has no more lines than a rank-$r$ projective geometry over $\GF(q)$. We also give examples showing that for every other prime power this bound does not hold. 
\end{abstract}
\maketitle

\section{Introduction}

This paper is motivated by the following special case of a conjecture
due to Bonin; see Oxley~[\ref{oxley}, p. 582].
\begin{conjecture}\label{bonin}
For each prime power $q$ and positive integer $r$,every rank-$r$ matroid with no $U_{2,q+2}$-minor
has at most $\qbinom{r}{2}$ lines.
\end{conjecture}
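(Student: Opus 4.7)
The plan is to induct on the rank $r$. The base case $r \le 2$ is immediate, since a simple rank-$2$ matroid is a single line and $\qbinom{2}{2} = 1$.

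For the inductive step, choose a hyperplane $H$ of $M$. By induction applied to $M|H$ (rank $r-1$, still $U_{2,q+2}$-minor-free), the lines of $M$ contained in $H$ number at most $\qbinom{r-1}{2}$. Every other line $L$ of $M$ meets $H$ in at most a single point, since submodularity of the rank function yields
\[
r(L \cap H) \le r(L) + r(H) - r(L \cup H) = 2 + (r-1) - r = 1.
\]
I would then double-count incidences between the lines with $L \not\subseteq H$ and the elements of $E(M) \setminus H$:
\[
\sum_{L \not\subseteq H}|L \setminus H| \;=\; \sum_{e \in E(M) \setminus H}|\{L : e \in L\}| \;\le\; |E(M) \setminus H|\cdot\qbinom{r-1}{1},
\]
where the final inequality uses Kung's theorem applied to $M/e$ (again $U_{2,q+2}$-minor-free) to bound the number of lines through a fixed element by the number of points of the rank-$(r-1)$ contraction.

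The target identity $\qbinom{r}{2} - \qbinom{r-1}{2} = q^{r-2}\qbinom{r-1}{1}$ shows that the induction would close immediately if every relevant line satisfied $|L \setminus H| \ge q$ and $|E(M) \setminus H| \le q^{r-1}$, the tight situation realised by $\PG(r-1,q)$ with $H$ a coordinate hyperplane. The main obstacle is accounting for the slack when these conditions fail: short lines of size less than $q+1$ meeting $H$, lines disjoint from $H$, and hyperplanes whose complements are too large. Overcoming this appears to require $q$-specific structural information about $U_{2,q+2}$-minor-free matroids beyond Kung's point bound: one would choose $H$ to maximise $|H|$ and then classify near-extremal configurations on the cocircuit $E(M) \setminus H$ via finer matroid-minor theory. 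The abstract already warns that this programme can only succeed for $q \le 5$ and fails for $q \ge 7$, so any valid argument must exploit features particular to the small-$q$ classes (for instance, binary structure when $q=2$) rather than relying on $q$-agnostic incidence counting.
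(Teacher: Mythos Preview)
Your proposal is, as you yourself flag, not a proof but a plan with an acknowledged gap: the hyperplane double-count only closes when every line not in $H$ meets $E(M)\setminus H$ in at least $q$ points and $|E(M)\setminus H|\le q^{r-1}$, and you have no mechanism for controlling either quantity. Choosing $H$ to maximise $|H|$ does not obviously help, since short lines can still cross $H$, and the slack does not localise to any fixed rank.

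The paper sidesteps exactly this difficulty by using the \emph{dual} decomposition: instead of restricting to a hyperplane, it \emph{contracts a nonloop} $e$. The key identity is
\[
W_2(M)\;=\;W_1(M/e)\;+\;\sum_{P\in\cP_M(e)} W_2^{\,e}(M|P),
\]
where $\cP_M(e)$ is the set of planes of $M$ through $e$ and $W_2^{\,e}(N)$ counts the lines of $N$ avoiding $e$. This is an \emph{equality}, not an inequality with slack: every line through $e$ becomes a point of $M/e$, and every line off $e$ sits in a unique plane through $e$. Now Kung bounds the first term by $\tqbinom{r-1}{1}$, and since $|\cP_M(e)|=W_2(M/e)$, induction bounds the number of summands by $\tqbinom{r-1}{2}$. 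The entire burden is therefore pushed into the single rank-$3$ estimate
\[
W_2^{\,e}(M|P)\le q^2\qquad\text{for every plane }P\ni e,
\]
after which the $q$-binomial identity $\tqbinom{r-1}{1}+q^2\tqbinom{r-1}{2}=\tqbinom{r}{2}$ closes the induction exactly.

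So the ``$q$-specific structural information'' you anticipate is confined entirely to rank~$3$: one must show that a rank-$3$ matroid in $\cU(q)$ has at most $q^2$ lines missing any given nonloop (a slightly sharper statement than the $q^2+q+1$ bound on $W_2$). For $q\in\{2,3,4\}$ this is done by elementary counting; for $q=5$ it needs a small computer search. Your hyperplane-deletion route never isolates a fixed-rank subproblem of this kind, which is why the slack remains uncontrollable.
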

Here $\qbinom{r}{2} = \tfrac{(q^r-1)(q^{r-1}-1)}{(q-1)(q^2-1)}$ is a $q$-binomial coefficient. The projective geometry $\PG(r-1,q)$ has $\qbinom{r}{2}$ lines,
so the conjectured bound is attained. Blokhuis gave examples refuting Conjecture~\ref{bonin}
for all $q\ge 13$; see Nelson~[\ref{nelson}]. Our main result is the following.
\begin{theorem}\label{main}
Conjecture~\ref{bonin} holds if and only if $q\le 5$.
\end{theorem}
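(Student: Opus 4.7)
The proof splits into the two directions of the biconditional.

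For the ``only if'' direction, we must exhibit, for each prime power $q\ge 7$, a rank-$r$ matroid with no $U_{2,q+2}$-minor and strictly more than $\qbinom{r}{2}$ lines. Blokhuis's examples recorded in Nelson~[\ref{nelson}] already handle all $q\ge 13$, so the task reduces to the four sporadic values $q\in\{7,8,9,11\}$. I would search for such examples in small rank, most naturally $r=3$ or $r=4$, where the required inequality becomes a finite counting statement. A promising route is to start with a dense subset of $\PG(r-1,q')$ for some prime power $q'>q$ and delete points to destroy every $(q+2)$-point line while keeping the number of lines above $\qbinom{r}{2}$; alternatively, one can construct the configuration directly from a quadric, a Hermitian arc, or a linear code defined over a nearby field. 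Only one counterexample per value of $q$ is needed.

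For the ``if'' direction, fix $q\in\{2,3,4,5\}$ and let $M$ be a simple rank-$r$ matroid with no $U_{2,q+2}$-minor. The plan is to induct on $r$, with cases $r\le 2$ trivial. For the inductive step, fix an element $e\in E(M)$ and split $\ell(M)$ into lines through $e$ and lines avoiding $e$. Lines through $e$ correspond bijectively to the points of $\si(M/e)$, so by Kung's theorem they number at most $\qbinom{r-1}{1}$. Each line avoiding $e$ lies in a unique plane of $M$ containing $e$, and these planes correspond to the lines of $\si(M/e)$, of which there are at most $\qbinom{r-1}{2}$ by induction. Through the $q$-Pascal identity
\[
\qbinom{r}{2} = \qbinom{r-1}{1} + q^{2}\,\qbinom{r-1}{2},
\]
it would then suffice to show that, averaged over the planes of $M$ through $e$, each plane contributes at most $q^{2}$ lines avoiding $e$.

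The main obstacle is twofold. First, the rank-$3$ base case is itself a non-trivial extremal statement: every simple rank-$3$ matroid with no $U_{2,q+2}$-minor has at most $q^{2}+q+1$ lines. This is precisely where the hypothesis $q\le 5$ enters essentially, since for $q\ge 7$ the analogous rank-$3$ statement fails and the counterexamples for the forward direction will typically already live in rank $3$. I expect the rank-$3$ proof to require a delicate double-counting of point--line incidences together with a case analysis on the multiset of line lengths, and to be genuinely tight only for small $q$. Second, the inductive step faces a per-plane mismatch---the rank-$3$ bound alone leaves each plane with up to $q^{2}+q$ lines avoiding $e$, not $q^{2}$---so closing the argument will require either a clever choice of $e$ guaranteeing many long lines through it in each plane, a global averaging over $e$, or a strengthened induction hypothesis tracking points and lines simultaneously. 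The case $q=5$ is expected to be the most delicate, as it sits at the exact threshold of validity.
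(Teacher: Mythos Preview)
Your inductive skeleton for the ``if'' direction matches the paper's exactly: contract a nonloop $e$, count lines through $e$ via Kung's theorem, and count lines avoiding $e$ plane by plane, with the planes through $e$ enumerated by $W_2(M/e)$. You also correctly isolate the obstacle: the plain rank-$3$ bound $W_2 \le q^2+q+1$ is too weak by $q$ per plane. The paper's resolution is a sharpened form of one of your suggestions, but it is the base case rather than the induction hypothesis that gets strengthened: one proves the \emph{pointed} bound $W_2^e(N) \le q^2$ for every nonloop $e$ in every rank-$3$ matroid $N\in\cU(q)$ with $q \in \{2,3,4,5\}$. With that in hand the induction closes immediately via $W_2(M) = W_1(M/e) + \sum_{P} W_2^e(M|P) \le \tqbinom{r-1}{1} + q^2 \tqbinom{r-1}{2} = \tqbinom{r}{2}$, with no averaging and no clever choice of $e$ required. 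Establishing this pointed rank-$3$ bound is where essentially all the work lies; for $q \le 4$ it follows from moderate double-counting (Lemmas~\ref{verylongline}--\ref{shortline}), while for $q=5$ the paper needs a computer search (Lemma~\ref{computation}) together with a fact about $1$-factorisations of $K_6$ (Lemma~\ref{k6}).

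For the ``only if'' direction, your plan to handle $q\in\{7,8,9,11\}$ by an unspecified search is not yet a proof. The paper gives a single explicit construction $M(q,t)$ (Lemma~\ref{construction}): start with $\PG(2,q)$, fix a point $e$, on each of the $q+1$ lines through $e$ retain a $t$-subset (chosen so as to cover three fixed lines off $e$), and then shatter each retained $t$-set into its $\binom{t}{2}$ two-point lines. The result lies in $\cU(q+t-2)$ and has exactly $q^2 + (q+1)\binom{t}{2}$ lines. A short calculation, choosing for each $\ell\ge 7$ a suitable prime power $q$ and setting $t=\ell+2-q$, then refutes the conjecture for every integer $\ell \ge 7$, subsuming Blokhuis's examples rather than merely supplementing them.
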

All known counterexamples to Conjecture~\ref{bonin} have rank $3$ and it is quite plausible that the 
conjecture holds whenever $r\ge 4$; this is supported by a result of Nelson~[\ref{nelson}] that the conjecture
holds when $r$ is sufficiently large relative to $q$.

The proof of Conjecture~\ref{bonin} is straightforward for $q \in \{2,3,4\}$. For $q = 5$ we solve the problem partly by computer search. In all four cases we devote most of our attention to the rank $3$ case, to which the general case is easily reduced. 

\section{Preliminaries}

We follow the notation of Oxley [\ref{oxley}]. We write $\cU(\ell)$ for the class of matroids with no $U_{2,\ell+2}$-minor. If $e \in E(M)$ then we write $W_1(M)$ for the number of points of $M$, $W_2(M)$ for the number of lines of $M$, $W_2^e(M)$ for the number of lines of $M$ \emph{not} containing $e$, and $\delta_M(e)$ for the number of lines of $M$ containing $e$. For a simple rank-$3$ matroid $M$, we have $M \in \cU(\ell)$ iff $\delta_M(e) \le \ell + 2$ for all $e \in E(M)$. $W_1$ and $W_2$ are the first two  \emph{Whitney numbers of the second kind}.

The following theorem was proved by Kung [\ref{kungextremal}].

\begin{theorem}\label{kung}
	If $\ell \ge 2$ is an integer and $M \in \cU(\ell)$ has rank $r$, then $W_1(M) \le \tqbinom{r}{1} = \frac{q^{r}-1}{q-1}$.
\end{theorem}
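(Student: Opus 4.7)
The plan is to induct on the rank $r$, using contraction of a single element as the inductive device. For the base cases $r \le 2$, the bound holds immediately from the definition of $\cU(\ell)$: a rank-$1$ simple matroid has a single point and $\tqbinom{1}{1} = 1$, while a simple rank-$2$ member of $\cU(\ell)$ is a uniform matroid $U_{2,k}$ with $k \le \ell + 1$, matching $\tqbinom{2}{1}$ under the identification $q = \ell$.

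For $r \ge 3$, I would fix any element $e \in E(M)$ and pass to $\si(M/e)$, which has rank $r-1$ and still lies in $\cU(\ell)$ since the class is minor-closed. By the induction hypothesis, $W_1(\si(M/e)) \le \tqbinom{r-1}{1}$. The points of $\si(M/e)$ correspond bijectively with the lines of $M$ through $e$, so the number of such lines is at most $\tqbinom{r-1}{1}$. Since $M$ has no $U_{2,\ell+2}$-restriction, each line of $M$ has at most $\ell + 1$ points, and hence contributes at most $\ell$ points other than $e$ itself. Partitioning $E(\si M) \setminus \{e\}$ by the unique line through $e$ containing each point then yields
\[
W_1(M) \;\le\; 1 + \ell \cdot \tqbinom{r-1}{1} \;=\; 1 + \ell \cdot \frac{q^{r-1}-1}{q-1} \;=\; \frac{q^r-1}{q-1},
\]
where the last equality is a short arithmetic identity using $\ell = q$.

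The argument encounters no genuine obstacle; the result is classical. The one delicate ingredient is the line-size cap of $\ell + 1$, which is what allows the induction to close without losing a factor and which forces tightness at $\PG(r-1,q)$ when $\ell = q$. An alternative route would be to induct on the invariant $W_1(M) - 1$ (the number of points distinct from a chosen $e$), but the contraction argument above is the most direct.
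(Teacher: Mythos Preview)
Your argument is correct and is exactly the classical proof of this result. The paper does not supply its own proof of this theorem, instead attributing it to Kung~[\ref{kungextremal}]; the induction by contracting a nonloop $e$ and bounding each line through $e$ by $\ell$ further points is precisely Kung's original argument.
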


Surprisingly, we require a small graph theory result. A \emph{$1$-factorisation} of a graph is a partition of its edge set into perfect matchings.

\begin{lemma}\label{k6}
	Any two $1$-factorisations of the graph $K_6$ have an element in common. 
\end{lemma}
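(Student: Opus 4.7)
The plan is to reduce the lemma to two structural facts about $1$-factorisations of $K_6$: $(\ast)$ every perfect matching of $K_6$ extends to exactly two $1$-factorisations, and $(\dagger)$ no two distinct $1$-factorisations share two matchings in common.

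For $(\ast)$, fix a perfect matching $M$ of $K_6$. Then $K_6 - M$ is isomorphic to the complete tripartite graph $K_{2,2,2}$ with the three pairs of $M$ forming the parts, so $1$-factorisations of $K_6$ containing $M$ correspond bijectively to $1$-factorisations of $K_{2,2,2}$. Each perfect matching of $K_{2,2,2}$ uses exactly one edge between each pair of parts, so its $8$ perfect matchings are naturally indexed by $\{0,1\}^3$, and a short check shows two are edge-disjoint iff their labels differ in at least two coordinates. A $1$-factorisation of $K_{2,2,2}$ consists of $4$ pairwise edge-disjoint perfect matchings, equivalently a $4$-subset of $\{0,1\}^3$ with all pairwise Hamming distances at least $2$; the only such subsets are the even- and odd-weight vectors, giving exactly two $1$-factorisations.

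For $(\dagger)$, suppose $M, M' \in \mathcal{F}_1 \cap \mathcal{F}_2$ with $M \ne M'$. Since $M$ and $M'$ are edge-disjoint perfect matchings of $K_6$, their union is a $2$-regular simple subgraph on all $6$ vertices with no $2$-cycles, hence a single Hamilton $6$-cycle. So $K_6 - (M \cup M')$ is the complement of a $6$-cycle in $K_6$, i.e.\ a triangular prism (two triangles joined by a perfect matching). Direct inspection shows this prism has exactly $4$ perfect matchings, of which only one triple is pairwise edge-disjoint; hence $\mathcal{F}_1 \setminus \{M, M'\} = \mathcal{F}_2 \setminus \{M, M'\}$, forcing $\mathcal{F}_1 = \mathcal{F}_2$.

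To conclude: by $(\ast)$ and double-counting (perfect matching, $1$-factorisation) incidence pairs, we have $5 F = 15 \cdot 2$, so $K_6$ has exactly $F = 6$ $1$-factorisations. Given any $\mathcal{F}_1$, $(\ast)$ associates to each $M \in \mathcal{F}_1$ a unique $\mathcal{F}(M) \ne \mathcal{F}_1$ containing $M$, and by $(\dagger)$ these five factorisations $\mathcal{F}(M)$ are all distinct (else some $\mathcal{F}(M) = \mathcal{F}(M')$ would share both $M$ and $M'$ with $\mathcal{F}_1$), so together with $\mathcal{F}_1$ they exhaust all $6$. Thus every $\mathcal{F}_2 \ne \mathcal{F}_1$ equals $\mathcal{F}(M)$ for some $M \in \mathcal{F}_1$, giving $M \in \mathcal{F}_1 \cap \mathcal{F}_2$. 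The main obstacle is the small enumeration behind $(\ast)$ and $(\dagger)$, but each reduces to checking a graph with at most $12$ edges.
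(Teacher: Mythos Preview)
Your proof is correct. The overall architecture matches the paper's: both arguments hinge on $(\dagger)$ (two edge-disjoint perfect matchings of $K_6$ form a $6$-cycle whose complement, the triangular prism, has a unique $1$-factorisation, so two matchings determine the whole factorisation) and on the count $F=6$.

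The differences are in how the count and the endgame are handled. The paper counts $F$ by double-counting (ordered) pairs of colour classes against $6$-cycles, getting $10F=60$; you instead prove $(\ast)$ via the $K_{2,2,2}$/Hamming-cube analysis and double-count (matching, factorisation) pairs to get $5F=30$. For the final step, the paper observes that each edge of $K_6$ lies in exactly three perfect matchings, so two disjoint $1$-factorisations $F_1,F_2$ force the remaining five matchings to form a third factorisation $F_3$; any fourth factorisation must then meet some $F_i$ in at least two matchings and hence equal it by $(\dagger)$, contradicting $F=6$. Your route instead uses $(\ast)$ directly to show that the five ``partner'' factorisations of the matchings in $\mathcal{F}_1$ are pairwise distinct and therefore exhaust the other five. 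The paper's version is marginally shorter since it avoids the $K_{2,2,2}$ enumeration, while your $(\ast)$ gives a little extra structural information (the factorisations form a $6$-vertex graph in which every vertex has degree~$5$, i.e.\ $K_6$ again). Either way the small case checks are of comparable size.
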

\begin{proof}
	A $1$-factorisation of $K_6$ is a $5$-edge-colouring. The union of any two colour classes is a $2$-regular bipartite graph on $6$ vertices and edges, so is a $6$-cycle, and it is easy to check that for any $6$-cycle $C$ there is a unique $5$-edge-colouring having $C$ as the union of two of its colour classes. Each $5$-edge-colouring has $10$ pairs of colour classes and $K_6$ has $60$ $6$-cycles, so $K_6$ has six $1$-factorisations. 
	
	Suppose that there exist disjoint $1$-factorisations $F_1$ and $F_2$. Each edge is in exactly three perfect matchings, so the set $F_3$ of perfect matchings not in $F_1$ or $F_2$ is also a $1$-factorisation. Let $F$ be a $1$-factorisation that is not $F_1$, $F_2$ or $F_3$. Since $|F| = 5$ there is some $i$ such that $|F \cap F_i| \ge 2$, but now $F$ and $F_i$ share two colour classes and are thus equal by our above observation. This is a contradiction. 	
\end{proof}

Our next lemma, invoked twice in Section~\ref{five}, was proved by a computer search whose structure we briefly sketch. 

\begin{lemma}\label{computation}
	Let $A$ be a twelve-element set. There do not exist partitions $\cL_0,\cL_1, \dotsc, \cL_5$ of $A$ satisfying the following conditions: 
	\begin{enumerate}
		\item $\cL_0$ has exactly six blocks, each of size $2$,
		\item for each $i \in \{1,\dotsc,5\}$, the partition $\cL_i$ has at most $5$ blocks and each has size at most $4$, 
		\item for every distinct $x,y \in A$, there is exactly one $i \in \{0, \dotsc, 6\}$ such that $\cL_i$ has a block containing $x$ and $y$, 
		\item for each $i \in \{1, \dotsc, 5\}$, if $\cL_i$ has exactly five blocks then it has a block of size $1$. 
\end{enumerate} 
\begin{proof}[Sketch of computational proof:]
Fix $\cL_0$ arbitrarily and suppose that partitions $\cL_1, \dotsc, \cL_5$ exist. For convenience we assume they each have exactly five parts and allow parts to be empty. The block sizes of each $\cL_i : i \in \{1, \dotsc, 5\}$ gives an integer partition $(n_{i,1},\dotsc,n_{i,5})$ of $12$ so that $4 \ge n_{i,1} \ge n_{2} \ge \dotsc \ge n_{i,5} \ge 0$ and $n_{i,5} \le 1$. Moreover, there are $66$ unordered pairs of distinct elements of $A$ and six of these pairs are contained in blocks of $\cL_0$, so $\sum_{i = 1}^5 \sum_{j = 1}^5 \binom{n_{i,j}}{2} = 60$. 

	We say two set partitions $P,P'$ are \emph{compatible} if each block of $P$ intersects each block of $P'$ in at most one element. For each integer partition $p$ of $12$ into nonnegative parts, let $C(p)$ denote the set of partitions of $A$ that are compatible with $\cL_0$ and whose block sizes are the integers in $p$. Let $C'(p)$ denote the set of orbits of $C(p)$ under the action of the group of the $6! \cdot 2^6$ permutations of $A$ that fix $\cL_0$. The following table shows the nine possible $p$ that satisfy our constraints and their associated parameters.
	\begin{center}
	 \begin{tabular}{| c | c | c | c |} 
	 	\hline
		$p$ & $|C(p)|$ & $|C'(p)|$ & $\sum_{j=1}^5 \tbinom{p_j}{2}$\\
		\hline 
		$(3, 3, 3, 2, 1)$ & 71040 & 5 & 10\\
		$(3, 3, 3, 3, 0)$ &  4960 & 3 & 12\\
		$(4, 3, 2, 2, 1)$ & 136320 & 9 & 11\\
		$(4, 3, 3, 1, 1)$ & 41280 & 5 & 12\\
		$(4, 3, 3, 2, 0)$ & 38400 & 4 & 13 \\
		$(4, 4, 2, 1, 1)$ & 27360 & 5 & 13\\
		$(4, 4, 2, 2, 0)$ & 12720 & 4 & 14\\
		$(4, 4, 3, 1, 0)$ & 15360 & 2 & 15\\
		$(4, 4, 4, 0, 0)$ & 960 & 1 & 18\\
		\hline
	 \end{tabular}
	\end{center}
	The tuple $(\cL_1, \dotsc, \cL_5)$ must belong to $\cC = C(p_1) \times C(p_1) \times \dotsc \times C(p_5)$, where $p_1, \dotsc, p_5$ are drawn from rows of the table above whose last column sums to $60$; there are $68$ such (unordered) $5$-tuples $p_1, \dotsc, p_5$. Moreover, the partitions $\cL_0, \dotsc, \cL_5$ must be pairwise compatible. For each of the $68$ possible $\cC$, a backtracking search shows this cannot occur; by considering our choice for $\cL_1$ up to a permutation of $A$ that preserves $\cL_0$, we need only consider one choice of $\cL_1$ from each orbit in $C'(p_1)$. Our search was performed with a Python program that runs in under two hours on a single CPU.

\end{proof}
	
\end{lemma}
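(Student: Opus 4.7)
My plan is to reduce the statement to a finite backtracking search. The conditions together say that $\cL_0,\dotsc,\cL_5$ partition the $\binom{12}{2}=66$ unordered pairs of elements of $A$, with each $\cL_i$ contributing the pairs lying inside its blocks, so the problem is really asking for a decomposition of the edge set of $K_{12}$ with a prescribed first piece. My first step would be to enumerate the block-size profiles allowed for $\cL_i$ with $i\ge 1$: integer partitions of $12$ with each part at most $4$, at most $5$ parts, and a part of size $1$ whenever there are exactly $5$ parts. For a profile $p=(p_1,\dotsc,p_5)$ the number of pairs it covers is $\sum_j \binom{p_j}{2}$, and since $\cL_0$ covers $6$ pairs the remaining five partitions must together cover exactly $60$; I would enumerate all unordered $5$-tuples of profiles whose pair-counts sum to $60$, which is a short finite list.

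For each such case I would search for partitions $\cL_1,\dotsc,\cL_5$ of the prescribed profiles that, together with $\cL_0$, are pairwise compatible in the sense that any two blocks from different $\cL_i$ meet in at most one element (equivalently, no pair of elements of $A$ is covered twice). To cut down the search I would fix $\cL_0$ once and for all and exploit the symmetry group of order $6!\cdot 2^6$ that stabilises $\cL_0$: for each admissible profile $p_1$, the partition $\cL_1$ need only be chosen from a representative of each orbit under this group, giving a very small list of starting configurations. With $\cL_0$ and $\cL_1$ in hand I would then continue by depth-first backtracking, placing one partition at a time, maintaining the set of pairs yet to be covered, and pruning as soon as some pair becomes impossible to place inside any remaining block.

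The main obstacle is runtime. Even after the symmetry reduction the number of candidate second partitions compatible with $\cL_0$ and $\cL_1$ is sizeable, and a na\"ive enumeration of all five-tuples of partitions would be hopeless, so aggressive constraint propagation within the backtracking loop is essential; I would in particular track, for each still-uncovered pair, the set of remaining profiles in which it could still be covered, and abort the branch as soon as this set empties for some pair. Once that is in place I would expect the search to be quite tractable, since each successive placement sharply restricts the next, and a direct implementation in a high-level language should terminate within a few hours on a single machine.
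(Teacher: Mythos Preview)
Your plan is essentially the same as the paper's own computational proof: fix $\cL_0$, enumerate the admissible block-size profiles for $\cL_1,\dotsc,\cL_5$, list the unordered $5$-tuples of profiles whose pair-counts sum to $60$, reduce the choice of $\cL_1$ modulo the stabiliser of $\cL_0$ (of order $6!\cdot 2^6$), and then finish by backtracking over pairwise-compatible partitions. The paper records the nine admissible profiles and the $68$ relevant $5$-tuples explicitly and reports a Python runtime under two hours, matching your expectations.
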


\section{Counterexamples}

In this section we construct counterexamples to Conjecture~\ref{bonin}. They are more elaborate versions of the aforementioned construction of Blokhuis. 

\begin{lemma}\label{construction}
	Let $q$ be a prime power and $t$ be an integer with $3 \le t \le q$. There is a rank-$3$ matroid $M(q,t)$ with no $U_{2,q+t}$-minor such that $W_2(M(q,t)) = q^2 + (q+1)\tbinom{t}{2}$. 
\end{lemma}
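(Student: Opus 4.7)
The plan is to construct $M(q,t)$ explicitly as a rank-$3$ matroid on a ground set of the form $E = A \cup B$, where $A$ is a copy of the $q^2$ points of $\AG(2,q)$ and $B = \{b_1, \dots, b_t\}$ is a set of $t$ ``direction'' points. Fixing $t$ parallel classes $\pi_1, \dots, \pi_t$ of $\AG(2,q)$, I would declare each line $L \in \pi_i$ to be extended to $L \cup \{b_i\}$ (a line of size $q+1$), and then specify additional lines that cover the remaining pairs of $E$ in a carefully chosen manner, in the spirit of Blokhuis's construction.

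The verification would then proceed in four steps: (1) complete the definition of the line-set $\cL$ so that every pair of elements of $E$ lies in exactly one member of $\cL$ and no two distinct lines share two points, which makes $\cL$ the line-set of a simple rank-$3$ matroid; (2) compute $|\cL|$ directly and confirm it equals $q^2 + (q+1)\binom{t}{2}$; (3) check that no line has more than $q + t - 1$ points; and (4) check that $\delta_{M(q,t)}(e) \le q + t - 1$ for every element $e$. Steps (3) and (4) together imply the absence of a $U_{2,q+t}$-minor in a simple rank-$3$ matroid, by the characterisation noted after Theorem~\ref{kung} (no long line, and no point on too many lines, which would give a $U_{2,q+t}$ contraction-minor).

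The main obstacle is step~(1). The naive version of the construction (take the extended $\pi_i$-lines plus $\binom{t}{2}$ two-point lines among the $b_i$'s) produces only $q(q+1) + \binom{t}{2}$ lines, leaving a shortfall of $q\bigl(\binom{t}{2} - 1\bigr)$ that must be made up either by subdividing some of the $q$-point lines in the $q+1-t$ unused parallel classes into smaller lines, or by introducing longer ``cross'' lines passing through pairs $b_i, b_j$ together with selected points of $A$, all while preserving the matroid axioms and the degree bound. A clean instance is $t=3$ with $q+1$ a prime power, where one can take $M(q,3) = \PG(2,q+1)$ directly: its $(q+1)^2 + (q+1) + 1 = q^2 + 3(q+1)$ lines of size $q+2$ meet all requirements on the nose. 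For cases like $(q,t) = (5,3)$, where $\PG(2,6)$ does not exist, a more intricate ``near-plane'' construction with lines of mixed sizes is required, and verifying its matroid validity, together with the exact line count, is the most delicate step.
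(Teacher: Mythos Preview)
Your proposal is not a proof: as you yourself note, step~(1) --- actually producing a line-set that satisfies the matroid axioms and hits the target count --- is left unresolved except in the special case $t=3$ with $q+1$ a prime power. The observation that $\PG(2,q+1)$ works when it exists is correct and pleasant, but it does not extend, and the vague appeal to ``subdividing lines'' or ``cross lines'' is not a construction. So there is a genuine gap: you have the bookkeeping framework (steps (2)--(4)) but not the object.

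The paper's construction is quite different from what you are attempting and avoids the difficulty entirely. Rather than building up from $\AG(2,q)$ plus direction points, it starts with $N \cong \PG(2,q)$, fixes a point $e$, and for each of the $q+1$ lines $L$ through $e$ selects a $t$-subset $T(L) \subseteq L \setminus \{e\}$; the ground set is $X = \bigcup_L T(L)$. The lines of $M(q,t)$ are then of two types: the restrictions to $X$ of the $q^2$ lines of $N$ \emph{not} through $e$ (these remain genuine lines, and one arranges the $T(L)$ to contain a fixed triangle $L_1 \cup L_2 \cup L_3$ so that every such restriction has at least two points), together with \emph{all} $2$-element subsets of each $T(L)$. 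The pencil through $e$ is thus ``shattered'' into $(q+1)\binom{t}{2}$ two-point lines, giving exactly $q^2 + (q+1)\binom{t}{2}$ lines in total. Every point lies on $q$ lines of the first type and $t-1$ of the second, so $\delta_{M(q,t)}(f) = q+t-1$ for all $f$, and the $U_{2,q+t}$-minor is excluded. The matroid axioms are inherited from $\PG(2,q)$ with no further work: two points in the same $T(L)$ span a two-point line, and two points in different $T(L)$'s span the restriction of the unique $\PG(2,q)$-line through them, which does not pass through $e$.

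The moral is that you were trying to \emph{add} structure to get more lines, whereas the paper \emph{removes} structure (breaking long lines into pairs) to inflate the count.
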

\begin{proof}
	Let $N \cong \PG(2,q)$. Let $e \in E(N)$ and let $L_1,L_2,L_3$ be distinct lines of $N$ not containing $e$ and so that $L_1 \cap L_2 \cap L_3$ is empty. Note that every line of $M$ other than $L_1, L_2$ and $L_3$ intersects $L_1 \cup L_2 \cup L_3$ in at least $2$ and at most $3$ elements. 
	
	Let $\cL$ be the set of lines of $N$ and $\cL_e$ be the set of lines of $N$ containing $e$. For each $L \in \cL_e$, let $T(L)$ be a $t$-element subset of $L - \{e\}$ containing $L \cap (L_1 \cup L_2 \cup L_3)$. Observe that the $T(L)$ are pairwise disjoint. Let $X = \cup_{L \in \cL_e} T(L)$, noting that $L_1 \cup L_2 \cup L_3 \subseteq X$ and so each line in $\cL$ intersects $X$ in at least two elements. Let $M(q,t)$ be the simple rank-$3$ matroid with ground set $X$ whose set of lines is $\cL_1 \cup \cL_2$, where $\cL_1 = \{L \cap X: L \in \cL - \cL_e\}$, and $\cL_2$ is the collection of two-element subsets of the sets $T(L): L \in \cL_e$. Note that $\cL_1$ and $\cL_2$ are disjoint. Every $f \in X$ lies in $q$ lines in $\cL_1$ and in $(t-1)$ lines in $\cL_2$, so $M(q,t)$ has no $U_{2,q+t}$-minor. Moreover, we have $\cL_1 = |\cL - \cL_e| = q^2$ and $\cL_2 = |\cL_e|\binom{t}{2} = (q+1)\binom{t}{2}$. This gives the lemma. 
	\end{proof}

This next theorem refutes Conjecture~\ref{bonin} for all $q \ge 7$. 

\begin{theorem}
	If $\ell$ is an integer with $\ell \ge 7$, then there exists $M \in \cU(\ell)$ such that $r(M) = 3$ and $W_2(M) > \ell^2 + \ell + 1$. 
\end{theorem}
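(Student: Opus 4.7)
The plan is to apply Lemma~\ref{construction} with carefully chosen parameters. Given $\ell \ge 7$, I would choose a prime power $q$ with $\lceil (\ell+2)/2 \rceil \le q \le \ell - 2$ and set $t = \ell + 2 - q$. These constraints give $4 \le t \le q$, so Lemma~\ref{construction} applies and yields a simple rank-$3$ matroid $M(q,t)$ with no $U_{2,q+t}$-minor; since $q + t = \ell + 2$, this matroid lies in $\cU(\ell)$, and has exactly $W_2(M(q,t)) = q^2 + (q+1)\binom{t}{2}$ lines.

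The next step is to show this number exceeds $\ell^2 + \ell + 1$. Substituting $\ell = q + t - 2$ into $\ell^2 + \ell + 1$, expanding, and collecting terms, one obtains the identity
\[ q^2 + (q+1)\binom{t}{2} - (\ell^2 + \ell + 1) = \tfrac{1}{2}(q-1)(t-2)(t-3), \]
which is strictly positive since $q \ge 4$ and $t \ge 4$. This factorisation is the algebraic heart of why the construction beats the conjectured bound; in particular it explains why the construction fails only when $t \in \{2,3\}$, i.e.\ when $q \ge \ell - 1$ and there is essentially no ``room'' to slack off from $\PG(2,q)$.

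The remaining subtlety, and in my view the main obstacle, is the existence of a prime power in the interval $[\lceil(\ell+2)/2\rceil,\ \ell - 2]$ for every $\ell \ge 7$. This is essentially a quantitative form of Bertrand's postulate. For small $\ell$ one can simply point to an explicit choice (e.g.\ $q = 5$ for $\ell \in \{7,8\}$, $q = 7$ for $\ell \in \{9,10\}$, $q \in \{7,9\}$ for $\ell \in \{11,12\}$, and so on), and for large $\ell$ a quantitative Bertrand-type result, such as Nagura's theorem providing a prime in $(n, 6n/5)$ for $n \ge 25$, delivers a prime comfortably inside the target interval. This number-theoretic existence step is the only case-work in the argument; once $q$ is fixed the rest is a direct substitution into Lemma~\ref{construction}.
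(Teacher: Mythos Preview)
Your proposal is correct and follows essentially the same route as the paper: apply Lemma~\ref{construction} with $t=\ell+2-q$ for a prime power $q$ in $[\lceil(\ell+2)/2\rceil,\ell-2]$, and check the algebra. Your factorisation $\tfrac12(q-1)(t-2)(t-3)$ is exactly the paper's observation that $f_q(x)$ is a convex quadratic with roots $q$ and $q+1$ (these correspond to $t=2,3$), just expressed more cleanly.

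The only genuine difference is in handling the existence of a suitable prime power. The paper avoids any appeal to Bertrand--Nagura: for $7\le\ell<127$ it tabulates an explicit list $\{5,7,9,13,19,32,59,113\}$ covering all cases, and for $\ell\ge127$ it switches to $M(q,q)$ with $q$ a power of $2$ in $(\tfrac14(\ell+2),\tfrac12(\ell+2)]$ (such a power always exists trivially) together with a cruder cubic-versus-quadratic estimate. Your approach is more uniform and the algebra is nicer, at the cost of importing Nagura's theorem; the paper's is more self-contained but splits into two regimes. Both work.
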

\begin{proof}
	 If $\ell \ge 127$, let $q$ be a power of $2$ such that $\tfrac{1}{4}(\ell+2) < q \le \tfrac{1}{2}(\ell+2)$.  We have $W_2(M(q,q)) = q^2 + \binom{q}{2}(q+1) >\tfrac{1}{2}q^3 \ge \tfrac{1}{128} (\ell+1)^3 \ge (\ell+1)^2 > \ell^2 + \ell + 1$. 
	
	If $7 \le \ell < 127$, then it is easy to check that there is some prime power $q \in \{5,7,9,13,19,32,59,113\}$ such that $\tfrac{1}{2}(\ell+2) \le q \le \ell-2$. Note that $3 < \ell+2-q \le q$. Let $f_q(x) = q^2 + (q+1)\tbinom{x+2-q}{2} - (x^2 + x + 1)$. This function $f_q(x)$ is quadratic in $x$ with positive leading coefficient and $f_q(q) = f_q(q+1) = 0$; it follows that $f(x) > 0$ for every integer $x \notin \{q,q+1\}$. Now the matroid $M = M(q,\ell+2-q)$ satisfies $M \in \cU(\ell)$ and $W_2(M) - (\ell^2 + \ell + 1) = f_q(\ell) > 0$. 
\end{proof}

We conjecture that, for large $\ell$, the matroids $M(q,q)$ give the correct upper bound for the number of lines in a rank-$3$ matroid in $\cU(\ell)$.

\begin{conjecture}
	If $\ell$ is a sufficiently large integer and $M \in \cU(\ell)$ has rank $3$, then $W_2(M) \le W_2(M(q,q)) = q^2 + \binom{q}{2}(q+1)$, where $q$ is the largest prime power such that $2q \le \ell+2$.   
\end{conjecture}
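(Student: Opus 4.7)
This statement is a conjecture rather than an established theorem; what follows is a sketch of the most natural attack rather than a complete proof.

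The plan is to work within a linear-programming framework. Let $M$ be a simple rank-$3$ matroid in $\cU(\ell)$, set $n = W_1(M)$, and for each integer $k \ge 2$ let $n_k$ denote the number of $k$-lines of $M$. The governing identities and inequalities are
\begin{align*}
W_2(M) &= \sum_{k\ge 2} n_k, \\
\sum_{k\ge 2} k\,n_k &= \sum_{e\in E(M)} \delta_M(e) \le n(\ell+1),\\
\sum_{k\ge 2} \binom{k}{2}n_k &= \binom{n}{2},
\end{align*}
together with $n_k = 0$ for $k \ge \ell+2$ and Kung's bound $n \le Q^2 + Q + 1$, where $Q$ is the largest prime power at most $\ell$.

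First I would analyse the LP relaxation of $\max \sum n_k$ subject to the two linear constraints on the $n_k$. A vertex of the feasible region has at most two nonzero coordinates; a direct calculation shows that in the regime $n > \ell+2$ the optimum is attained at the pair $(n_2, n_{\ell+1})$, giving the bound $W_2(M) \le n\bigl[(\ell+1)(\ell+2)-(n-1)\bigr]/(2(\ell+1))$. Optimising over $n$ yields an upper bound of the correct order $\Theta(\ell^3)$, but exceeds the conjectured $W_2(M(q,q)) \approx \tfrac12 q^3$ by roughly a factor of two.

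The second step would be to rule out the LP extremal configuration, i.e.\ to show that matroids with many lines of size close to $\ell+1$ have much smaller $W_2$ than the LP permits. A line $L$ of size $s$ through $e$ consumes one slot of the degree budget at $e$ while covering $s-1$ of the $n-1$ other elements, so the remaining lines through $e$ are squeezed in both count and total size; and the same trade-off must hold simultaneously at every point of $L$. A careful local accounting of this exchange, iterated over all $e \in L$, should force $\max_L |L| \le q+1$, matching the line-size profile of $M(q,q)$.

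The main obstacle is the third step. Once all lines have size at most $q+1$, one needs to prove that the long lines (those of size $\ge 3$) carry essentially the incidence structure of $\PG(2,q)$, with the remaining degree budget at each point filled in by $2$-lines inside pencils. This amounts to a stability theorem for $\PG(2,q)$ inside a rank-$3$ matroid with bounded line-length, together with a matching argument (reminiscent of Lemma~\ref{k6}) to pin down how the $2$-lines can attach. I am not aware of such a stability theorem in the literature in the needed generality, and its absence is consistent with the conjecture being open.
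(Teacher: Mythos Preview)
You have correctly recognised that the statement is a conjecture; the paper offers no proof and no proof strategy for it beyond exhibiting the matroids $M(q,q)$ as the conjectural extremal examples. There is therefore nothing in the paper to compare your sketch against, and you are right to present it as an outline of an attack rather than a proof. Your LP computation is sound: the optimum of the relaxation really is attained at the pair $(n_2,n_{\ell+1})$ and gives the formula you state, which overshoots the conjectured bound by a factor of roughly two; and you correctly identify the stability step as the genuine obstruction.

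One small correction worth making even in a sketch: what you call ``Kung's bound $n \le Q^2+Q+1$, where $Q$ is the largest prime power at most $\ell$'' is not Kung's theorem. Kung's bound (Theorem~\ref{kung} in the paper) gives only $n \le \ell^2+\ell+1$ for rank-$3$ matroids in $\cU(\ell)$; replacing $\ell$ by the largest prime power $Q \le \ell$ would require the prime-power conjecture for projective planes, which is open. This does not affect the asymptotics of your argument, since your LP optimum is attained at $n \approx \tfrac12\ell^2$, well below either bound, but the statement should be corrected.
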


\section{Small $q$}

\begin{lemma}\label{verylongline}
	Let $q \ge 2$ be an integer. If $M \in \cU(q)$ has rank $3$ and has a $U_{2,q+1}$-restriction, then $W_2(M) \le q^2 + q + 1$ and $W_2^e(M) \le q^2$ for each nonloop $e$ of $M$. 
\end{lemma}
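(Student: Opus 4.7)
Fix a $(q+1)$-point line $L$ of $M$ (this exists by the $U_{2,q+1}$-restriction hypothesis). The plan is to show that every line other than $L$ meets $L$, and then to count lines through each point of $L$.

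First I would establish that $\delta_M(e) \le q+1$ for every nonloop $e$ of $M$: otherwise the simplification of $M/e$ would be a rank-$2$ matroid with at least $q+2$ points, i.e.\ a $U_{2,q+2}$-minor. Next, for any $e \in E(M) \setminus L$, the $q+1$ lines through $e$ and the points of $L$ are pairwise distinct (two such lines would span $L \cup \{e\}$, forcing rank $\ge 3$), giving $\delta_M(e) \ge q+1$. Combined with the previous bound, $\delta_M(e) = q+1$ and every line through $e$ meets $L$. If some line $L'\ne L$ were disjoint from $L$, picking $e \in L'$ would contradict this. Hence every line other than $L$ meets $L$ in exactly one point.

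For each $f \in L$, set $d_f := \delta_M(f) - 1$, so $d_f \le q$ and $d_f$ counts lines through $f$ other than $L$. Since every line $L' \ne L$ meets $L$ in a unique point, summing over $f \in L$ gives
\[
W_2(M) \;=\; 1 + \sum_{f \in L} d_f \;\le\; 1 + (q+1)q \;=\; q^2 + q + 1.
\]
This proves the first bound.

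For the second bound, consider a nonloop $e$ of $M$. If $e \notin L$, then $\delta_M(e) = q+1$ by the observation above, so $W_2^e(M) = W_2(M) - \delta_M(e) \le (q^2+q+1) - (q+1) = q^2$. If instead $e \in L$, then the lines not containing $e$ are precisely those lines $L' \ne L$ whose unique intersection point with $L$ lies in $L \setminus \{e\}$, so
\[
W_2^e(M) \;=\; \sum_{f \in L \setminus \{e\}} d_f \;\le\; q \cdot q \;=\; q^2,
\]
completing the proof. I do not anticipate a serious obstacle here: the key structural observation that every line other than $L$ meets $L$ does all the work, and the two counting bounds then follow immediately.
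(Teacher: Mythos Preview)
Your proof is correct and follows essentially the same strategy as the paper: show every line meets $L$, then count lines through the points of $L$. The only cosmetic difference is that the paper argues directly (``if $L'$ misses $L$, contract a point of $L'$ to get a $U_{2,q+2}$-minor''), whereas you first pin down $\delta_M(e)=q+1$ for every $e\notin L$ and deduce the same fact; these are two phrasings of the same contraction argument. One small omission: you should state at the outset that we may assume $M$ is simple (as the paper does), so that ``$e\notin L$'' is unambiguous and your count of $q+1$ distinct lines from $e$ to $L$ goes through cleanly.
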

\begin{proof}
	We may assume that $M$ is simple; let $M|L$ be a $U_{2,q+1}$-restriction of $M$. If some line $L'$ of $M$ does not intersect $L$ then contracting a point of $L'$ yields a $U_{2,q+2}$-minor, so every line of $M$ intersects $L$. Therefore $W_2(M) = \sum_{x \in L}(\delta_M(x) - 1) + 1 \le (q+1)((q+1)-1) + 1 = q^2 + q + 1$. For each $e \in E(M) - L$ we clearly have $\delta_M(e) = q+1$ so $W_2^e(M) \le (q^2 + q + 1) - (q + 1) = q^2$. For each $e \in L$ we have $W_2^e(M) = \sum_{x \in L - \{e\}} (\delta_M(e) - 1) \le q (q + 1 - 1 ) = q^2$. 
\end{proof}

\begin{lemma}\label{disjointbound}
	If $q \in \{2,3,4\}$ and $M \in \cU(q)$ is a rank-$3$ matroid with a $U_{2,q}$-restriction $L$ and no $U_{2,q+1}$-restriction, then at most $q$ lines of $M$ are disjoint from $L$.
\end{lemma}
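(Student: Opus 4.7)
Assume without loss of generality that $M$ is simple, and set $L = \{e_1, \dotsc, e_q\}$, $P = E(M) \setminus L$, and $n = |P|$. Two elementary facts drive the argument. Since $M \in \cU(q)$ gives $\delta_M(e) \le q+1$ for every $e$, and every line of $M$ has at most $q$ points, the non-$L$ lines through any fixed $e_i \in L$ partition $P$ into at most $q$ classes, each of size at most $q-1$, so $n \le q(q-1)$. Also, each $p \in P$ lies on the $q$ distinct lines $pe_1, \dotsc, pe_q$ (all meeting $L$), and $\delta_M(p) \le q+1$ then forces $p$ to lie on at most one line disjoint from $L$. In particular, any two lines disjoint from $L$ are themselves disjoint as point sets.

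Now suppose for contradiction that there are $d \ge q+1$ lines $L^{(1)}, \dotsc, L^{(d)}$ disjoint from $L$, with sizes $b_k = |L^{(k)}| \ge 2$. Their union $D$ satisfies $2(q+1) \le \sum_k b_k = |D| \le n \le q(q-1)$; this is impossible for $q \in \{2, 3\}$, and for $q = 4$ restricts $n$ to $\{10, 11, 12\}$. Combined with $n \le (\delta_M(e_i) - 1)(q-1)$, this forces $\delta_M(e_i) = q+1 = 5$ for every $i$, so each partition $\Pi_i$ of $P$ by the non-$L$ lines through $e_i$ has exactly four parts of sizes in $\{1,2,3\}$ summing to $n$.

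To finish the case $q = 4$, I count pairs of distinct elements of $P$ in two ways. Each such pair spans a unique non-$L$ line of $M$, which either meets $L$ at some $e_i$ (contributing a pair within one part of $\Pi_i$) or equals some $L^{(k)}$. Writing $s_i$ for the number of pairs within parts of $\Pi_i$, this gives
\[
\tbinom{n}{2} = \sum_{i=1}^{4} s_i + \sum_{k=1}^{d} \tbinom{b_k}{2}.
\]
By convexity, the maximum of $\sum_j \tbinom{a_j}{2}$ over four-part partitions of $n$ with part sizes in $\{1,2,3\}$ is $9, 10, 12$ for $n = 10, 11, 12$ respectively, so $\sum_i s_i \le 36, 40, 48$. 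On the other hand, with $d \ge 5$, $b_k \in [2, 4]$, and $\sum_k b_k \le n$, inspection of the few feasible size multisets for the $b_k$ shows $\sum_k \tbinom{b_k}{2} \le 5, 7, 10$ in the three respective cases. Comparing with $\tbinom{n}{2} = 45, 55, 66$ gives a contradiction for every $n \in \{10, 11, 12\}$.

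The main difficulty is the case $q = 4$: the naive bound $d \le n/2 \le 6$ coming from pairwise disjointness of the $L^{(k)}$ falls short by two, and the weighted pair count above is what closes the gap. I do not see a single uniform inequality that rules out all three values of $n$ at once, so a short split into the subcases $n \in \{10, 11, 12\}$ seems unavoidable.
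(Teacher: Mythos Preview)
Your proof is correct. The overall strategy---observing that points off $L$ lie on at most one line disjoint from $L$, and then double-counting pairs via the partitions of $P$ induced by the lines through each $e_i \in L$---is the same as the paper's. The execution differs in one respect: the paper does not work with all of $P$. Instead it selects two points from each of $q+1$ assumed disjoint lines to form a set $X$ of fixed size $2(q+1)$, observes that at least $\binom{2(q+1)}{2}-(q+1)=2q(q+1)$ pairs in $X$ span a line meeting $L$, and then applies pigeonhole over the $q$ elements of $L$ to find a single $y \in L$ whose non-$L$ lines contain at least $2(q+1)$ of these pairs. This reduces each $q$ to a single infeasibility check (for $q=4$: no integers $n_1,\dotsc,n_4\in\{0,\dotsc,3\}$ with $\sum n_i = 10$ and $\sum\binom{n_i}{2}\ge 10$), avoiding your case split on $n\in\{10,11,12\}$. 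Your version trades that trick for a slightly longer but entirely transparent endgame; both are short and either is fine.
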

\begin{proof}
	We may assume that $M$ is simple. Suppose that there is a set $\cL$ of lines disjoint from $L$ such that $|\cL| = q+1$. Since each $x \in E(M) - L$ lies on $q$ lines intersecting $L$ it lies on at most one line in $\cL$, sothe lines in $\cL$ are pairwise  disjoint. Let $X$ be a set formed by choosing two points from each line in  $\cL$; note that $|X| = 2(q+1)$ and $X \cap L = \varnothing$. 
	
	Since each $X$ lies on at most one line disjoint from $L$, at most $(q+1)$ pairs of elements of $X$ span lines disjoint from $L$, so at least $\binom{2(q+1)}{2} - (q+1) = 2q(q+1)$ pairs of elements of $X$ span a line intersecting $L$. Since $|L| = q$, there is some $y \in L$ such that at least $2(q+1)$ pairs of elements of $X$ span $y$. Let $\cL_y$ be the set of lines of $M|(\{y\} \cup X)$ that contain $y$. Every line in $\cL_y$ spans a line of $M$ containing $y$ and none spans $L$ itself, so $|\cL_y| \le q$. We also have $\sum_{L \in \cL_y}(|L| - 1) = |X| = 2(q+1)$ and $\sum_{L \in \cL_y}\binom{|L|-1}{2} \ge 2(q+1)$ by choice of $y$. Since $M$ has no $U_{2,q+1}$-restriction, we also have $|L| - 1 \le q-1$ for each $L \in \cL_y$. It remains to check that, for $q \in \{2,3,4\}$ there are no solutions to the system $n_1 + n_2 + \dotsc + n_q = 2(q+1)$, $\binom{n_1}{2} + \dotsc + \binom{n_q}{2} \ge 2(q+1)$ subject to $n_i \in \{0,\dotsc,q-1\}$ for each $i$. This is easy.

\end{proof}

\begin{lemma}\label{longline}
	Let $q \in \{2,3,4\}$. If $M \in \cU(q)$ has rank $3$ and has a $U_{2,q}$-restriction, then $W_2(M) \le q^2 + q + 1$ and $W_2^e(M) \le q^2$ for each nonloop $e$ of $M$.
\end{lemma}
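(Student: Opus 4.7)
The plan is to split on whether $M$ contains a line with more than $q$ points. If $M$ has a $U_{2,q+1}$-restriction, then Lemma~\ref{verylongline} applied with the same value of $q$ yields both bounds at once, so I may assume every line of $M$ has at most $q$ points. Fix a $U_{2,q}$-restriction $L$ and let $d$ denote the number of lines of $M$ disjoint from $L$. In a simple rank-$3$ matroid any two distinct lines meet in at most one point, so every line other than $L$ either meets $L$ in a single point or is disjoint from $L$. Using that $\delta_M(y) \le q+1$ for every $y \in L$ (since $M \in \cU(q)$) together with Lemma~\ref{disjointbound} giving $d \le q$, I get
\[
W_2(M) = 1 + \sum_{y \in L}(\delta_M(y)-1) + d \le 1 + q\cdot q + q = q^2 + q + 1.
\]

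For the $W_2^e$ bound, when there is a choice I would take $L$ to pass through $e$: if $e$ lies on any $U_{2,q}$-restriction of $M$, let $L$ be such a restriction. If $e \in L$, then for any $y \in L\setminus\{e\}$ the line through $y$ and $e$ is $L$ itself, so every line through $y$ distinct from $L$ avoids $e$, and
\[
W_2^e(M) = \sum_{y \in L\setminus\{e\}}(\delta_M(y)-1) + d \le (q-1)q + q = q^2.
\]
If $e \notin L$ and $\delta_M(e) \ge q+1$, then subtracting from the $W_2$ bound gives $W_2^e(M) \le q^2$ directly.

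The remaining case is $e \notin L$ with $\delta_M(e) = q$; our choice of $L$ then forces no $U_{2,q}$-restriction to contain $e$. Since each $y \in L$ determines a line through $e$ and these $q$ lines are distinct, they are all the lines through $e$; write them as $K_1,\dots,K_q$ with $K_i \cap L = \{y_i\}$. Because no $K_i$ is a $U_{2,q}$-restriction, each $|K_i| \le q-1$, and $E(M) = \{e\} \sqcup \bigsqcup_i (K_i\setminus\{e\})$ gives $|E(M)\setminus(L\cup\{e\})| = \sum_i(|K_i|-2) \le q(q-3)$. This quantity is non-positive for $q \in \{2,3\}$, so the case collapses (there are no ``other'' lines through points of $L$, and no room for disjoint lines). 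For $q = 4$ there are at most four points outside $L\cup\{e\}$, each already lying on one line through $e$ and on $q-1$ lines through the remaining $y_j$'s; hence each such point lies on at most one further line. A direct count combining these facts with the $W_2^e$ decomposition $1 + \sum_{y \in L}(\delta_M(y)-2) + d$ closes the case.

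The main obstacle is exactly this last sub-case at $q=4$: the general count inherited from the $W_2$ bound is one unit too weak, and the structural information that no line through $e$ attains size $q$ — equivalently, that $e$ sits on no $U_{2,q}$-restriction — must be used to pin down the outside of $L\cup\{e\}$ tightly enough to save the last unit.
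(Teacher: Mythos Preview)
Your argument is correct and follows essentially the same route as the paper: reduce via Lemma~\ref{verylongline} to the case of no $U_{2,q+1}$-restriction, count lines through a $q$-point line $L$ using $\delta_M(y)\le q+1$ together with Lemma~\ref{disjointbound}, and then treat separately the sub-case where $e$ lies on no $U_{2,q}$-restriction. The one place you differ is that last sub-case. The paper dispenses with $L$ entirely there: since every line through $e$ has at most $q-1$ points, $|E(M)\setminus\{e\}|\le q(q-2)$, and a direct incidence count gives $W_2^e(M)\le \tfrac12\,q\cdot|E(M)\setminus\{e\}|\le \tfrac12 q^2(q-2)\le q^2$ for $q\le 4$. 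Your version instead bounds $|Z|=|E(M)\setminus(L\cup\{e\})|\le q(q-3)$ and uses the ``at most one further line'' observation; this does close the case, but you should make the final arithmetic explicit: the further lines are exactly the lines disjoint from $L$, so $d\le |Z|/2\le 2$, and with $\delta_M(y_i)\le 5$ you get $W_2^e(M)\le 1+4\cdot 3+2=15\le 16$ for $q=4$. Both arguments work; the paper's is shorter because it avoids tracking $L$ in this sub-case.
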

\begin{proof}
	We may assume that $M$ is simple and, by Lemma~\ref{verylongline}, that $M$ has no $U_{2,q+1}$-restriction; let $M|L$ be a $U_{2,q}$-restriction of $M$ and let $f \in L$. If $W_2^f(M) \ge q^2 + 1$ then, since each $x \in L - \{f\}$ is on at most $q$ lines not containing $f$, there are at most $(|L|-1)q = q^2 - q$ lines that intersect $L$ but not $f$. Therefore there are at least $(q^2+1)-(q^2-q) = q+1$ lines that do not intersect $L$. This is a contradiction by Lemma~\ref{disjointbound}. So $W_2^f(M) \le q^2$ for each $e$ in a $U_{2,q}$-restriction of $M$; since $W^2(M) = W_2^f(M) + \delta_M(f) \le W_2^f(M) + q+ 1$ for every $f$ this resolves the first part of the lemma, as well as the second part if $e$ is in a $U_{2,q}$-restriction. 
	
	It remains to bound $W_2^e(M)$ if $e$ is in no $U_{2,q}$-restriction. If $\delta_M(e) \ge q+1$ then we have $W_2^e(M) = W_2(M) - \delta_e(M) \le q^2$ as required, so we may assume that $\delta_M(e) \le q$. Therefore $e$ is in at most $q$ lines containing at most $q-2$ other points each, so $|E(M) - e| \le q(q-2)$. Each $x \in E(M) - e$ is in at most $q$ lines not containing $e$ and each such line contains at least $2$ points of $E(M) - e$, so $W_2^e(M) \le \tfrac{1}{2}q|E(M)-e| = \tfrac{1}{2}q^2(q-2) \le q^2$, since $\tfrac{1}{2}(q-2) \le 1$. 
	
\end{proof}

\begin{lemma}\label{shortline}
	If $q \in \{2,3,4\}$ and $M \in \cU(q)$ has rank $3$ and has no $U_{2,q}$-restriction, then $W_2(M) \le q^2 + q + 1$ and $W_2^e(M) \le q^2$ for each nonloop $e$ of $M$.
\end{lemma}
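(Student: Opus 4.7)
The plan is to reduce to the simple case and then run a short double-counting argument, splitting on the value of $q$.

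First, I would pass to the simplification of $M$, which changes neither $W_2$ nor $W_2^e$. The hypothesis that $M$ has no $U_{2,q}$-restriction says every line of $M$ has at most $q-1$ points, and $M \in \cU(q)$ gives $\delta_M(f) \le q+1$ for every $f \in E(M)$. The case $q = 2$ is then vacuous: every line would have at most one point, which is impossible in a rank-$3$ simple matroid.

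For $q \in \{3, 4\}$, set $n = |E(M)|$ and, for each $k \in \{2, \dotsc, q-1\}$, let $n_k$ be the number of lines of $M$ of size exactly $k$. Double counting point--line incidences and pairs of points yields
\[ \sum_k k \cdot n_k = \sum_{e \in E(M)} \delta_M(e) \le (q+1) n \quad \text{and} \quad \sum_k \binom{k}{2} n_k = \binom{n}{2}. \]
When $q = 3$ only $n_2$ is nonzero, and these reduce to $n \le 5$ and $W_2(M) = \binom{n}{2} \le 10$. When $q = 4$ they become $2n_2 + 3n_3 \le 5n$ and $n_2 + 3n_3 = \binom{n}{2}$; eliminating $n_2$ gives $W_2(M) = n_2 + n_3 = \tfrac{1}{3}\bigl(2n_2 + \binom{n}{2}\bigr) \le \tfrac{n(21-n)}{6}$, where the last inequality uses $n_2 \le 5n - \binom{n}{2}$. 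The condition $n_2 \ge 0$ forces $n \le 11$, and a short integer check over $n \in \{3,\dotsc,11\}$ then gives $W_2(M) \le 18$. In either subcase $W_2(M) \le q^2 + q + 1$.

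For the second bound, I would combine the identity $W_2^e(M) = W_2(M) - \delta_M(e)$ with the observation that $\delta_M(e) \ge 2$ for every $e$ in a simple rank-$3$ matroid: extend $e$ to a basis $\{e,f,g\}$, so that $\cl\{e,f\}$ and $\cl\{e,g\}$ are two distinct lines through $e$. This gives $W_2^e(M) \le W_2(M) - 2$, which is $\le 16 = q^2$ when $q = 4$ and $\le 8 < 9$ when $q = 3$.

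I do not anticipate a serious obstacle. The only piece requiring care is the integer optimization in the $q = 4$ step, where the quadratic $n(21-n)/6$ must be checked to stay at most $18$ over the admissible integer range. It is worth noting that the $W_2^e \le q^2$ bound is essentially tight for $q = 4$, so it really is necessary to invoke $\delta_M(e) \ge 2$ (rather than just $\ge 1$) coming from the rank-$3$ assumption.
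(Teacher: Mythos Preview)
Your argument is correct and uses the same double-counting identities as the paper. For $q\in\{2,3\}$ the treatments coincide. For $q=4$ the paper also combines $n_2+3n_3=\binom{n}{2}$ with the incidence bound (phrased as $n\le 6+\ell_f$ and summed), but argues by contradiction from $W_2(M)\ge 22$; you instead maximise $\tfrac{n(21-n)}{6}$ directly and obtain the sharper $W_2(M)\le 18$.

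The one place your route genuinely diverges is the $W_2^e$ bound. The paper does not deduce it from the $W_2$ bound; it says only that a contradiction to $W_2^e(M)>16$ can be obtained ``in a manner similar to the proof of Lemma~\ref{longline}''. Your approach is cleaner: because you have the tighter $W_2(M)\le 18$ for $q=4$ (and $\le 10$ for $q=3$), the elementary observation $\delta_M(e)\ge 2$ in a simple rank-$3$ matroid immediately yields $W_2^e(M)\le W_2(M)-2\le q^2$. Note that this shortcut would \emph{not} work from the paper's bound $W_2(M)\le 21$, so the extra precision you extracted is exactly what makes your endgame possible.
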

\begin{proof}
	We may assume that $M$ is simple; let $n = |M|$. If $q = 2$ then the result is vacuous and if $q = 3$ then $M$ has no $U_{2,3}$-restriction so $M \cong U_{3,n}$ and $n \le 5$ so both conclusions are clear. It remains to resolve the $q = 4$ case. 
	
	Suppose that $W_2(M) \ge 4^2 + 4 + 2 = 22$. Every line of $M$ contains either two or three points; for each $f \in E(M)$ let $\ell_f$ be the number of $3$-point lines of $M$ containing $f$. Let $\ell$ be the total number of $3$-point lines of $M$. Each $3$-point line of $M$ contains $3$ pairs of points of $M$, so $22 \le W_2(M) = \binom{n}{2} - 2\ell$. Moreover, every $e \in E(M)$ is in at most $5$ lines so $n \le 1 + 2 \ell_f + (5-\ell_f) = 6 + \ell_f$. Summing this expression over all $f \in E(M)$ gives $n^2 \le 6n + 3\ell$. Therefore $2(6n+3\ell) + 3(\binom{n}{2}-2\ell) \ge 2n^2 + 66$, giving $0 \ge n^2 - 21n + 132 = (n - \tfrac{21}{2})^2 + \tfrac{87}{4}$, a contradiction; therefore $W_2(M) \le 4^2 + 4 + 1$. From here, it is also easy to obtain a contradiction to $W_2^e(M) > 4^2$ in a manner similar to the proof of Lemma~\ref{longline}. 
\end{proof}

\section{Five}\label{five}

We now consider the number of lines in rank-$3$ matroids in $\cU(5)$, first dealing with those that have no $U_{2,5}$-restriction. 

\begin{lemma}\label{shortlines1}
	If $M \in \cU(5)$ has rank $3$ and has no $U_{2,5}$-restriction, then $W_2(M) \le 5^2 + 5 + 1$.
\end{lemma}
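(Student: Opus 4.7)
The plan is to mimic the double counting that handled the $q = 4$ case in Lemma~\ref{shortline}, but now allowing $4$-point lines in addition to $2$- and $3$-point lines. We may assume $M$ is simple; let $n = |E(M)|$ and, for $i \in \{2, 3, 4\}$, let $n_i$ denote the number of $i$-point lines of $M$. Every line has at most four points because $M$ has no $U_{2,5}$-restriction, and every point lies on at most six lines because $M \in \cU(5)$ (contracting a point on $\ge 7$ lines would give a $U_{2,7}$-minor).

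The two identities I need are the pair count $\binom{n}{2} = n_2 + 3 n_3 + 6 n_4$ and the incidence count $\sum_{f \in E(M)} \delta_M(f) = 2 n_2 + 3 n_3 + 4 n_4$; the latter is at most $6n$ by the bound on $\delta_M$. Subtracting the incidence count from twice the pair count yields
\[
3 n_3 + 8 n_4 \;=\; 2\binom{n}{2} - \sum_{f \in E(M)} \delta_M(f) \;\ge\; n(n-1) - 6 n \;=\; n(n - 7).
\]
I would then rewrite $W_2(M) = n_2 + n_3 + n_4 = \binom{n}{2} - 2 n_3 - 5 n_4$ and combine this with the trivial inequality $2 n_3 + 5 n_4 \ge \tfrac{5}{8}(3 n_3 + 8 n_4)$ (equivalent to $n_3 \ge 0$, with coefficient chosen to make the $n_4$-terms match).

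This gives
\[
W_2(M) \;\le\; \binom{n}{2} - \tfrac{5}{8}\, n (n - 7) \;=\; \tfrac{1}{8}\, n(31 - n),
\]
and the right-hand side is at most $30$ over integer $n$ (with equality at $n \in \{15, 16\}$), comfortably below the claimed bound of $31$. I do not anticipate any real obstacle; the only care point is choosing the coefficient $\tfrac{5}{8}$ in the linear combination so that the resulting quadratic in $n$ is bounded by a constant, and this is forced once we decide to eliminate $n_4$.
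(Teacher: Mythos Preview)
Your proof is correct and follows essentially the same double-counting approach as the paper: both use the pair identity $\binom{n}{2} = \ell_2 + 3\ell_3 + 6\ell_4$, the incidence bound $2\ell_2 + 3\ell_3 + 4\ell_4 \le 6n$, and the line count $W_2(M) = \ell_2 + \ell_3 + \ell_4$, then take a linear combination to force a quadratic bound in $n$. The paper argues by contradiction (assuming $W_2(M) \ge 32$ and deriving $\ell_2 + 3\ell_3 < 0$), while you bound $W_2(M)$ directly by $\tfrac{1}{8}n(31-n) \le 30$; your arrangement is slightly cleaner and in fact gives the marginally sharper bound $W_2(M) \le 30$.
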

\begin{proof}
	We may assume that $M$ is simple. Let $n = |M|$ and for each $i \in \{2,3,4\}$, let $\ell_i$ be the number of lines of length $i$ in $M$, noting that every line of $M$ has length $2,3$ or $4$. Suppose for a contradiction that $\ell_2 + \ell_3 + \ell_4 \ge 32$. Let $P$ be the set of pairs $(e,L)$ where $e \in L$. We have $2 \ell_2 + 3 \ell_3 + 4\ell_4 = |P| = \sum_{e \in E(M)} \delta_M(e) \le 6n$. There are $\binom{n}{2}$ pairs of elements of $M$, each of which is contained in exactly one line of $M$, and an $i$-element line contains $\binom{i}{2}$ such pairs. We therefore have $\ell_2 + 3\ell_3 + 6\ell_4 = \binom{n}{2}$. Now 
	\begin{align*}
		\ell_4 &= (\ell_2 + 3\ell_3 + 6\ell_4) + 3(\ell_2 + \ell_3 + \ell_4) - 2(2\ell_2 + 3\ell_3 + 4\ell_4) \\ 
			   &\ge \tbinom{n}{2}  + 3 \cdot 32 - 2 \cdot 6n
	\end{align*}
	nd $\ell_1 + 3\ell_3 = \binom{n}{2} - 6\ell_4 \le 72n - 18 \cdot 32 - 5\binom{n}{2} = \tfrac{-5}{2}\left(n-\tfrac{149}{10}\right)^2 - \tfrac{839}{40} < 0$, a contradiction.
\end{proof}

\begin{lemma}\label{shortlines2}
	If $M \in \cU(5)$ is a rank-$3$ matroid with no $U_{2,5}$-restriction and $e$ is a nonloop of $M$, then $W_2^e(M) \le 5^2$. 
\end{lemma}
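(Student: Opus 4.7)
The strategy is to split on $\delta_M(e)$. We may assume $M$ is simple. Since $M \in \cU(5)$ has rank $3$, the simplification $\si(M/e)$ has at most $6$ points, so $\delta_M(e) \le 6$. If $\delta_M(e) = 6$ then Lemma~\ref{shortlines1} yields $W_2^e(M) = W_2(M) - \delta_M(e) \le 31 - 6 = 25$ directly. So I may assume $\delta_M(e) \le 5$ and, for a contradiction, that $W_2^e(M) \ge 26$.

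Let $L_1, \dotsc, L_d$ be the lines of $M$ through $e$, let $a_i = |L_i| - 1$ for each $i$ (so $a_i \in \{1,2,3\}$ since $M$ has no $U_{2,5}$-restriction), and set $s = \sum_i a_i = |M| - 1$. For $i \in \{2,3,4\}$, let $\ell_i^e$ be the number of $i$-point lines of $M$ not containing $e$. A pair of points in $E(M) \setminus \{e\}$ lies on a line avoiding $e$ if and only if the two points belong to distinct blocks of the partition $\{L_j \setminus \{e\}\}_{j=1}^d$, so
\[ \ell_2^e + 3\ell_3^e + 6\ell_4^e = \sum_{i<j} a_i a_j =: A. \]
Since each $x \ne e$ lies on $\delta_M(x) - 1 \le 5$ lines not through $e$, I would also have $2\ell_2^e + 3\ell_3^e + 4\ell_4^e \le 5s$. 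The identity
\[ \ell_4^e = (\ell_2^e+3\ell_3^e+6\ell_4^e) + 3(\ell_2^e+\ell_3^e+\ell_4^e) - 2(2\ell_2^e+3\ell_3^e+4\ell_4^e) \]
from Lemma~\ref{shortlines1}, combined with $\ell_2^e+\ell_3^e+\ell_4^e \ge 26$, gives $\ell_4^e \ge A + 78 - 10s$, while nonnegativity of $\ell_2^e$ and $\ell_3^e$ forces $\ell_4^e \le A/6$. Hence $5A \le 60s - 468$.

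To finish, I would use $a_i \le 3$ a second time: this gives $\sum a_i^2 \le 3s$ and hence $A \ge \tfrac{1}{2}s(s-3)$, so the contradiction reduces to the positivity of $5s^2 - 135s + 936$ for all real $s$. This follows from the discriminant calculation $135^2 - 20 \cdot 936 = -495 < 0$.

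I expect the main obstacle to be arranging the identities and inequalities so that the final quadratic in $s$ has negative discriminant. Two ingredients conspire to make this work: the partition of $E(M) \setminus \{e\}$ into the classes $L_j \setminus \{e\}$ (which produces the pair-count identity for $A$), and the tighter bound $a_i \le 3$ coming from the no $U_{2,5}$-restriction hypothesis. Without the latter the quadratic would have positive discriminant and the underlying linear program would admit solutions with $W_2^e(M) \ge 26$.
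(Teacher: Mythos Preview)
Your argument is correct and follows essentially the same counting scheme as the paper: reduce to $\delta_M(e)\le 5$, set up the three linear relations among $\ell_2^e,\ell_3^e,\ell_4^e$, and derive a quadratic inequality in $n-1$ with negative discriminant. The only difference is cosmetic: where the paper bounds the number of ``bad'' pairs by $5\binom{3}{2}=15$, you instead use $\sum a_i^2\le 3\sum a_i$ to obtain $A\ge \tfrac{1}{2}s(s-3)$, which is a slightly sharper and cleaner way to exploit the hypothesis $a_i\le 3$.
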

\begin{proof}
	We may assume that $M$ is simple. If $\delta_M(e) = 6$ then $W_2^e(M) \le 31-6 = 25$ by the previous lemma, so we may assume that $\delta_M(e) \le 5$. Let $n = |M|$ and let $\ell_2^e$, $\ell_3^e$ and $\ell_4^e$ be the number of lines of length $2$, $3$ and $4$ respectively that do not contain $e$. Suppose for a contradiction that $\ell_2^e + \ell_3^e + \ell_4^e \ge 26$. Let $P$ be the set of pairs $(f,L)$, where $L$ is a line not containing $e$ and $f \in L$.  Clearly $|P| = 2\ell_2^e + 3\ell_3^e + 4\ell_4^e$, but also, since every $f \ne e$ is on at most $5$ lines not containing $e$, we have $|P| \le 5(n-1)$, so $2\ell_2^e + 3\ell_3^e + 4\ell_4^e \le 5(n-1)$. Finally, let $Q$ be the set of two-element sets $\{f_1,f_2\} \subset E(M)$ that span a line not containing $e$. As before, we have $|Q| = \ell_2^e + 3\ell_3^e + 6\ell_4^e$. On the other hand, there are at most $5$ lines of $M$ through $e$ and each contains at most $3$ other points, so there are at most $5\binom{3}{2} = 15$ two-element subsets of $E(M) - \{e\}$ that are not in $Q$. Therefore $|Q| = \binom{n-1}{2}-s$ for some $s \in \{0, \dotsc, 15\}$, and $\ell_2^e + 3\ell_3^e + 6\ell_4^e = \binom{n-1}{2}-s$. Now
	\begin{align*}
		\ell_4^e &= (\ell_2^e + 3\ell_3^e + 6\ell_4^e) + 3(\ell_2^e + \ell_3^e + \ell_4^e) - 2(2\ell_2^e + 3\ell_3^e + 4\ell_4^e) \\ 
			   &\ge \tbinom{n-1}{2} - s + 3 \cdot 26 - 2(5(n-1))\\
			   &= \tbinom{n-1}{2} - 10n + 88 - s.
	\end{align*}
	Therefore, using $s \le 15$ we have $\ell_2^e + 3\ell_3^e = |Q|-6\ell_4 \le \binom{n-1}{2}-s - 6(\binom{n-1}{2}-10n+88-s) = 60n - 528 - 5\binom{n-1}{2} + 5s \le 60n -453 - 5\binom{n-1}{2} = \tfrac{-5}{2}\left(n - \tfrac{27}{2}\right)^2 - \tfrac{19}{8} < 0$, a contradiction. 
\end{proof}

\begin{lemma}\label{bash}
	If $M \in \cU(5)$ has rank $3$ and has a $U_{2,5}$-restriction, then $W_2(M) \le 5^2 + 5 + 1$.
\end{lemma}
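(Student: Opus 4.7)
My plan is to argue by contradiction, assuming $M$ is simple with $W_2(M) \ge 32$ and obtaining a contradiction via Lemma~\ref{computation}. By Lemma~\ref{verylongline} applied with $q=5$ we may assume $M$ has no $U_{2,6}$-restriction, so every line of $M$ has at most $5$ points. Fix a $U_{2,5}$-restriction $M|L$. Since $M$ has no $U_{2,7}$-minor, $\delta_M(e) \le 6$ for every point $e$; as each $e \notin L$ already lies on the five distinct lines joining it to the points of $L$, it lies on at most one line of $M$ disjoint from $L$, and hence the set $\cL_0$ of lines of $M$ disjoint from $L$ is pairwise disjoint. Writing $d_x = \delta_M(x) - 1 \le 5$ and $k = |\cL_0|$, the identity $W_2(M) = 1 + \sum_{x \in L} d_x + k \ge 32$ forces $k \ge 6$.

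Next I would set up six partitions of a $12$-element set for Lemma~\ref{computation}. Select six lines $L_1, \dotsc, L_6 \in \cL_0$ and pick two points from each, taking $A$ to be their union; the six chosen pairs form a partition of $A$ into six blocks of size $2$, verifying condition~(1). For each $x \in L$, let $\cP_x$ be the partition of $A$ whose nonempty blocks are $(L' \setminus \{x\}) \cap A$ as $L'$ ranges over lines of $M$ through $x$ other than $L$. Since lines have at most $5$ points and $d_x \le 5$, each $\cP_x$ has at most $5$ blocks, each of size at most $4$, giving condition~(2). Every pair of elements of $A$ lies on a unique line of $M$, which is either one of the chosen $L_i$ (placing the pair in the first partition) or meets $L$ in a unique $x$ (placing the pair in $\cP_x$), verifying condition~(3).

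The hard part will be verifying condition~(4): whenever $\cP_x$ has exactly five blocks it must contain a singleton. The profiles of five blocks of size between $2$ and $4$ summing to $12$ are exactly $(4,2,2,2,2)$ and $(3,3,2,2,2)$, so these must be ruled out. The key observation is that the two chosen points of each $L_i$ necessarily lie in \emph{different} blocks of every $\cP_x$ (otherwise $L_i$ and the line through $x$ containing those two points would share two points and hence coincide, contradicting $L_i \in \cL_0$); so each $L_i$ distributes its two points across two distinct blocks of $\cP_x$, yielding a bipartite $2$-regular incidence pattern between the six $L_i$'s and the blocks of $\cP_x$. I expect that a careful choice of which two points to pick from each $L_i$---using Lemma~\ref{k6} to reconcile the choices when some $L_i$ has more than two points, and in the residual case that every $L_i$ has exactly two points with $d_x=5$ throughout (forcing $|E(M)|=17$) a direct structural argument---will eliminate the two forbidden profiles. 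Once condition~(4) is verified, Lemma~\ref{computation} delivers the required contradiction, and this final verification is the main obstacle.
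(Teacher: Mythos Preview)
Your setup is essentially identical to the paper's: assume a simple counterexample with $W_2(M)\ge 32$, use Lemma~\ref{verylongline} to forbid $6$-point lines, count to find six lines disjoint from $L$, pick two points from each to form the twelve-element set $A$, and build the six partitions required by Lemma~\ref{computation}. Conditions~(1)--(3) are verified exactly as you describe.

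The genuine gap is condition~(4), and your proposed route to it does not work. The ``split'' observation (that the two chosen points of each $L_j$ lie in distinct blocks of every $\cP_x$) is correct but insufficient: there exist partitions of a $12$-set into five blocks of sizes $(4,2,2,2,2)$ or $(3,3,2,2,2)$ that separate all six fixed pairs, so this constraint alone cannot rule out the forbidden profiles. Your hope of curing this by a clever choice of which two points to take from each $L_j$ collapses in exactly the case you call residual---when every disjoint line has only two points, there is no choice at all---and that case is not as constrained as you suggest (nothing forces $|E(M)|=17$, since there may be points of $E(M)\setminus L$ lying on no line disjoint from $L$). The appeal to Lemma~\ref{k6} has no visible connection to the obstruction.

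The idea you are missing is to choose $M$ to be a counterexample of \emph{minimum size}. Then for each $x_i\in L$, if all six lines through $x_i$ had at least three points, deleting $x_i$ would preserve every line, giving $W_2(M\setminus x_i)=W_2(M)\ge 32$; combined with Lemma~\ref{shortlines1} this yields a smaller counterexample. Hence some line through $x_i$ other than $L$ is a two-point line, and its intersection with $A$ has size at most one. This immediately gives condition~(4): either that block is empty (so $\cP_{x_i}$ has at most four blocks) or it is the required singleton.
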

\begin{proof}
	Let $M$ be a counterexample for which $|M|$ is minimized. Note that $M$ is simple, that $W_2(M) \ge 32$, and that, by Lemma~\ref{verylongline}, $M$ has no $U_{2,6}$-restriction.  
	
	Let $L = \{x_1,x_2,x_3,x_4,x_5\}$. Each element of $L$ lies on at most five other lines, so there are at least $32 - 5 \cdot 5 -1 = 6$ lines $L_{0,1},L_{0,2}, \dotsc, L_{0,6}$ of $M$ that do not intersect $L$.  For each $i \in \{1, \dotsc, 6\}$ let $a_{2i-1}$ and $a_{2i}$ be distinct elements of $L_{0,i}$. Note that each $e \in E(M) - L$ lies on five lines meeting $L$ so lies on at most one other line; it follows that the set $A = \{a_1, a_2, \dotsc, a_{12}\}$ has twelve elements and that $\cL_0 = \{L_{0,0}, \dotsc, L_{0,6}\}$ is a partition of $A$ into pairs. 
	
	For each $i \in \{1, \dotsc, 5\}$ let $\cL_i'$ be the set of lines of $M$ containing $x_i$ other than $L$ and let $\cL_i = \{L' - \{x_i\}: L' \in \cL_i'\}$. We have $|\cL_i| \le 5$ and clearly $\cL_i$ is a partition of $A$. If there are six lines through $x_i$ each containing at least two other points, then $W_2(M \del x_i) = W_2(M)$, contradicting minimality of $|M|$. Therefore $|L'| \le 1$ for some $L' \in \cL_i$. Since $M$ has no $U_{2,6}$-restriction we also have $|L'| \le 4$ for each $L \in \cL_i$. Finally, since each two-element subset of $A$ either spans a line in $\cL_0$ or a line in $\cL_i'$ for a unique $i$, each such pair is contained in a block of exactly one of the partitions $\cL_0, \dotsc, \cL_5$. By Lemma~\ref{computation} this is impossible.   
	
		
	\end{proof}
	
\begin{lemma}
	If $M \in \cU(5)$ has rank $3$ then $W_2^e(M) \le 5^2$ for each nonloop $e$ of $M$. 
\end{lemma}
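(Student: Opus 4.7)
The plan is to argue by minimum counterexample. Suppose $(M,e)$ is a counterexample with $|M|$ minimum; then $M$ is simple, $W_2^e(M) \ge 26$, and combining Lemmas~\ref{verylongline}, \ref{shortlines2} and \ref{bash} I may assume $M$ has a $U_{2,5}$-restriction but no $U_{2,6}$-restriction, and that $\delta_M(e) \le 5$. I pick a $U_{2,5}$-restriction $L$ cleverly: if some line through $e$ has five points, take it as $L$ (Case~A, $e \in L$); otherwise take any $U_{2,5}$-restriction (Case~B, $e \notin L$, so every line through $e$ has at most four points).

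In either case, every point off $L$ is joined to each of the five points of $L$ by a distinct line, so by $\delta_M \le 6$ each such point lies on at most one line disjoint from $L$; hence the lines of $M$ disjoint from $L$ are pairwise disjoint. A short count of lines not through $e$ that meet $L$ shows that the number $d$ of lines disjoint from $L$ satisfies $d \ge W_2^e(M) - 20 \ge 6$ in Case~A and $d \ge 35 - \sum_{x\in L}\delta_M(x)$ in Case~B. Whenever $d \ge 6$, I would apply Lemma~\ref{computation} exactly as in the proof of Lemma~\ref{bash}: let $\cL_0$ consist of six $2$-element blocks from six disjoint lines, and take the other five partitions from five centres (in Case~A, the four points of $L\setminus\{e\}$ together with $e$; in Case~B, the five points of $L$). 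The minimality argument from Lemma~\ref{bash} verifies condition~(4) for each centre $x_i \ne e$: if every line through $x_i$ had at least three points then $M\del x_i$ would retain every line and satisfy $W_2^e(M\del x_i) = W_2^e(M) \ge 26$, contradicting minimality. In Case~A the partition centred at $e$ has at most $\delta_M(e)-1 \le 4$ blocks, so condition~(4) is vacuous there. Lemma~\ref{computation} then yields a contradiction whenever $d\ge 6$.

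The only remaining sub-case is Case~B with $\delta_M(x) = 6$ for every $x \in L$ and $W_2^e(M) = 26$ exactly, where the count yields only $d = 5$. Here I would extract a very rigid structure. One gets $|M| = 16$, each line through $e$ has exactly four points, and each of the five disjoint lines has exactly two points. Writing $p_i$ for the sum of $\binom{|L'|-1}{2}$ over lines $L'$ through $x_i$ other than $L$ and $\overline{ex_i}$, a double-count gives $\sum_{i=1}^5 p_i = 35$ while each $p_i \in \{4,5,6,7\}$, forcing $p_i = 7$; this in turn forces each $x_i$ to lie on a unique further five-point line $B_i$, and a Cauchy--Schwarz/pair-count shows that any two of the six five-point lines $L, B_1,\dots,B_5$ meet in exactly one point. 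This gives a bijection between the $15$ points of $E(M)\setminus\{e\}$ and the $15$ edges of $K_6$ on vertex set $\{L, B_1,\dots,B_5\}$.

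The finishing step invokes Lemma~\ref{k6}. Under the bijection, each of the five lines through $e$ determines a triple of $K_6$-edges forming a perfect matching, and the five matchings together form a $1$-factorisation $\cF_e$ of $K_6$. A further pair-count---each point of $E(M)\setminus(L\cup\{e\})$ has exactly nine partners, accounted for by six pairs in the two big lines through it, one pair in the line through $e$, one pair in its disjoint line, and exactly one remaining pair on a three-point line through some centre---shows that the five three-point lines through the centres form a second $1$-factorisation $\cF'$ of $K_6$. Lemma~\ref{k6} produces a common perfect matching of $\cF_e$ and $\cF'$, but unpacking this common matching forces a four-point line through $e$ and a three-point line through some centre to contain the same two points off $L$, hence to coincide, which is absurd. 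The main obstacle is exactly this residual sub-case, where the generic Lemma~\ref{computation} argument falls one disjoint line short and one has to uncover the $K_6$-structure.
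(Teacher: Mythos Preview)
Your plan is correct and follows the paper's proof almost step for step: minimum counterexample, Lemma~\ref{computation} whenever six lines miss $L$, and in the residual sub-case the $K_6$/$1$-factorisation contradiction via Lemma~\ref{k6}. The only real difference is that where you assert $|M|=16$ with all lines through $e$ of size four and all five disjoint lines of size two (which does follow, since those five pairwise-disjoint lines force $10\le |M|-6\le 10$ and then $\sum_i|\overline{ex_i}|=20$), the paper instead passes to the $16$-element restriction $N=M|(L\cup A\cup\{e\})$ and runs the same structural analysis there.
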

\begin{proof}
	Let $(M,e)$ be a counterexample for which $|M|$ is minimized. Note that $M$ simple and that, by Lemma~\ref{shortlines2}, $M$ has a $U_{2,5}$-restriction $M|L$. If $\delta_M(e) \ge 6$ then $W_2^e(M) \le 5^2 + 5 + 1 - 6 = 25$ by Lemma~\ref{bash}, so $\delta_M(e) \le 5$. If there is some $f \in E(M) - \{e\}$ on six lines each containing at least two other points, then $W_2^e(M \del f) = W_2^e(M)$, contradicting minimality. Therefore every $x \in E(M)$ is on at most five lines that contain two other points (note that $e$ also has this property).  

If $e \in L$ then observe that each $f \in L - \{e\}$ is on at most $5$ other lines not containing $e$, so there are at least $26 - 20 = 6$ lines of $M$ disjoint from $e$. Let $B$ be a set formed by choosing of a pair of elements from each of these lines. In a similar manner to the previous lemma, we obtain six partitions of $B$ that contradict Lemma~\ref{computation}. We thus assume that $e \notin L$.
	
	Let $L = \{x_1, \dotsc, x_5\}$. Each $x \in L$ lies on at most four lines other than $L$ not containing $e$, so there exist $26 - 1 - 20 = 5$ lines $L_{0,1}, \dotsc, L_{0,5}$ of $M$ disjoint from $L \cup \{e\}$. If there are six such disjoint lines, then we again obtain a contradiction with Lemma~\ref{computation}; we therefore assume that every $x_i$ in $L$ lies on exactly four other lines of $M$ disjoint from $L$, so $\delta_M(x_i) = 6$ for each $i \in \{1,\dotsc,5\}$.
	
	 For each $j \in \{1, \dotsc, 5\}$ let $a_{2j-1},a_{2j}$ be distinct elements of $L_{0,j}$. Let $A = \{a_1, \dotsc, a_{10}\}$ and let $N = M|(L \cup A \cup \{e\})$. As in the proof of the previous lemma the lines $L_{0,j}$ partition $A$ into pairs, and so $|N| = 16$. Since $e$ lies on at most $5$ lines of $N$ each containing at most three other points, the elements of $E(N) - \{e\}$ partition into three-element sets $L_{1,e}, \dotsc, L_{5,e}$ such that $L_{j,e} \cup \{e\}$ is a four-element line of $N$ for each $j$.
	
	As before we consider the lines through each element of $L$, and for each $x_i \in L$ we obtain a partition $\cL_i = \{L_{i,1}, \dotsc, L_{i,5}\}$ of $A \cup \{e\}$ into five blocks corresponding to the lines of $N$ through $x_i$ other than $L$. Again we have $4 \ge |L_{i,1}| \ge |L_{i,2}| \ge \dotsc \ge |L_{i,5}| = 1$, (we have $|L_{i,5}| = 1$ here by minimality of $M$ and the fact that $\delta_M(x_i) = 6$) and $\sum_{j = 1}^5 |L_{i,j}| = 11$ for each $i$. Moreover, for each $i$ the point $x_i$ is on the four-element line $L_{i,e}$, so for some $j$ we have $|L_{i,j}| = 3$. Finally, there are $\binom{11}{2}-5 = 50$ pairs of elements in $A \cup \{e\}$ that do not span one of the lines $L_{0,i}$, so $\sum_{i = 1}^5\sum_{j = 1}^5\binom{|L_{i,j}|}{2} = 50$. 
	
	If $4 \ge n_1 \ge \dotsc \ge n_5 = 1$ are integers summing to $11$ such that some $n_i$ is $3$, then $\binom{n_1}{2} + \dotsc + \binom{n_5}{2} \le 10$ with equality only if $(n_1,n_2,\dotsc,n_5) = (4,3,2,1,1)$. Therefore  $(|L_{i,1}|,|L_{i,2}|, \dotsc, |L_{i,5}|) = (4,3,2,1,1)$ for each $i$; note that $L_{i,e} \cup \{x_i\} = L_{i,2}$. Therefore, in the fifteen-element matroid $N \del e$, each $x_i \in L$ lies on two five-element lines; two three-element lines and two two-element lines. For each integer $k$ Let $\cJ_k$ be the set of $k$-element lines of $N \del e$.
	
	Let $Y$ be the union of the lines in $\cJ_5$. By the above reasoning each $y \in Y$ lies on exactly two lines in $\cJ_5$, so it follows that $5|\cJ_5| = 2|Y|$ and so $|Y| \equiv 0 \pmod{5}$. Since three $5$-point lines account for at least $13$ points, it is clear that $|Y| >10$ and so we must have $|Y| = 15$ and $|Y| = E(N \del e)$. Therefore every element of $N \del e$ lies on exactly two lines in $\cJ_5$, $|\cJ_5| = \tfrac{2}{5}|Y| = 6$, and the elements of $N \del e$ are exactly the intersections of the $\binom{6}{2}$ pairs of lines in $\cJ_5$. There is now a natural mapping of $E(N \del e)$ to the edge set of the complete graph $K_6$ with vertex set $\cJ_5$, where the elements of each $J \in \cJ_5$ are the edges incident with the vertex $J$. The lines in $\cJ_3$ map to three-edge matchings. We know the lines $\cL_{i,e}- \{e\}$ are in $\cJ_3$ and partition $E(N \del e)$, and each $f \in E(N \del e)$ is contained in exactly two lines in $\cJ_3$, so $\cJ_3$ is the union of two disjoint partitions of $E(N \del e)$. This gives two disjoint $1$-factorisations of $K_6$, a contradiction by Lemma~\ref{k6}. 
	\end{proof}

\section{Higher Rank}

	Combining all lemmas in the last two sections gives the following: 
	
	\begin{theorem}\label{rank3}
		If $q \in \{2,3,4,5\}$ and $M$ is a rank-$3$ matroid in $\cU(q)$, then $W_2(M) \le q^2 + q + 1$ and $W_2^e(M) \le q^2$ for each nonloop $e$ of $M$. 
	\end{theorem}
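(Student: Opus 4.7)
The plan is to combine the lemmas of Sections~4 and~5 via a short case analysis on $q$ and on whether $M$ contains a sufficiently long uniform restriction.

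For $q \in \{2,3,4\}$ I would split into two cases according to whether $M$ has a $U_{2,q}$-restriction. If it does, then both inequalities follow directly from Lemma~\ref{longline}. If it does not, then Lemma~\ref{shortline} gives both inequalities. Together these two lemmas cover every rank-$3$ member of $\cU(q)$ in this range, so no further work is needed; in particular, Lemma~\ref{longline} itself has already absorbed the subcase of a $U_{2,q+1}$-restriction via Lemma~\ref{verylongline}.

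For $q = 5$ I would handle the two bounds separately. The bound $W_2^e(M) \le 25$ is exactly the statement of the preceding (unnamed) lemma, so nothing further is needed. For the bound $W_2(M) \le 31$ I would split on whether $M$ has a $U_{2,5}$-restriction: if it does, apply Lemma~\ref{bash}; if it does not, apply Lemma~\ref{shortlines1}.

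The substantive work has all been done in the earlier lemmas: the arithmetic estimates of Section~4 and, for $q = 5$, the $1$-factorisation argument from Lemma~\ref{k6} together with the computer search of Lemma~\ref{computation} that drive Lemma~\ref{bash} and the preceding unnamed lemma. The combination step presents no real obstacle; one only needs to observe that the two dichotomies on the existence of a $U_{2,q}$-restriction (respectively a $U_{2,5}$-restriction) are exhaustive, which is tautological. Hence the proof reduces to a one-line invocation of each relevant lemma in its corresponding case.
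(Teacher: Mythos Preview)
Your proposal is correct and matches the paper's approach exactly: the paper itself offers no proof beyond the sentence ``Combining all lemmas in the last two sections gives the following,'' and your case split on $q$ and on the presence of a $U_{2,q}$- or $U_{2,5}$-restriction is precisely the intended combination. There is nothing to add.
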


	We now generalise this to arbitrary rank. For a matroid $M$ and a nonloop $e \in E(M)$, let $\cP_M(e)$ denote the set of planes of $M$ containing $e$. Note that $|\cP_M(e)| = W_2(M \con e)$. When we contract a nonloop $e$ in a matroid $M$, every line through $e$ becomes a point and every set of lines not containing $e$ that span a plane in $\cP_M(e)$ are identified into a single line. This gives the following lemma:

	\begin{lemma}\label{contract}
		If $M$ is a matroid and $e \in E(M)$ is a nonloop, then $W_2(M) = W_1(M \con e) + \sum_{P \in \cP_M(e)}W_2^e(M|P)$.
	\end{lemma}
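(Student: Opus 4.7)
The plan is to partition the set of lines of $M$ into those that contain $e$ and those that do not, and count each class separately; summing the two counts should yield the identity.

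For the first class, I would invoke the standard correspondence between flats of $M \con e$ and flats of $M$ containing $e$: a rank-$k$ flat of $M \con e$ corresponds bijectively to a rank-$(k+1)$ flat of $M$ that contains $e$. Applied with $k=1$, this identifies the points of $M \con e$ with the lines of $M$ through $e$, so the number of lines of $M$ containing $e$ is $W_1(M \con e)$.

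For the second class, the key observation is that if $L$ is a line of $M$ and $e \notin L$, then $e \notin \cl(L) = L$, so $\cl(L \cup \{e\})$ has rank exactly $3$ and is therefore a plane $P \in \cP_M(e)$. Conversely, for each $P \in \cP_M(e)$, the lines $L$ of $M$ with $e \notin L$ and $\cl(L \cup \{e\}) = P$ are precisely the lines of $M|P$ not containing $e$, of which there are $W_2^e(M|P)$ by definition. Thus summing $W_2^e(M|P)$ over $P \in \cP_M(e)$ counts each line of $M$ avoiding $e$ exactly once.

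Adding the two counts gives the lemma. There is no real obstacle here: the argument is essentially bookkeeping on flats, and the only point requiring (mild) care is verifying the contraction-correspondence and the fact that a line avoiding the nonloop $e$ together with $e$ spans exactly one plane — both standard matroid facts.
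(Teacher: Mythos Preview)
Your argument is correct and is precisely the approach the paper takes: the paper's ``proof'' is just the one-sentence observation preceding the lemma that lines through $e$ become points of $M\con e$ and lines avoiding $e$ are grouped by the plane they span with $e$. You have simply spelled out that bookkeeping in full.
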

	
	From here we can easily verify Conjecture~\ref{main} for all $q \le 5$. 
	
	\begin{theorem}
		If $q \in \{2,3,4,5\}$ and $M \in \cU(q)$ then $W_2(M) \le \qbinom{r(M)}{2}$. 
	\end{theorem}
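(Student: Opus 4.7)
The plan is to prove the bound by induction on $r=r(M)$. The case $r\le 2$ is trivial since a rank-$2$ matroid has at most one line, and $\qbinom{0}{2}=\qbinom{1}{2}=0$, $\qbinom{2}{2}=1$. The case $r=3$ is exactly the first statement of Theorem~\ref{rank3}. So assume $r\ge 4$ and that the result holds for all smaller ranks.

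Next, I would pick an arbitrary nonloop $e\in E(M)$ and apply Lemma~\ref{contract} to write
\[
W_2(M) \;=\; W_1(M\con e) \;+\; \sum_{P\in\cP_M(e)} W_2^e(M|P).
\]
Each plane $P$ containing $e$ gives a rank-$3$ restriction $M|P$ in $\cU(q)$, so Theorem~\ref{rank3} yields $W_2^e(M|P)\le q^2$. The number of terms in the sum is $|\cP_M(e)| = W_2(M\con e)$, and since $M\con e$ has rank $r-1$ and lies in $\cU(q)$, the inductive hypothesis gives $|\cP_M(e)|\le \qbinom{r-1}{2}$. Finally, Theorem~\ref{kung} bounds $W_1(M\con e)\le\qbinom{r-1}{1}$. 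Combining,
\[
W_2(M) \;\le\; \qbinom{r-1}{1} \;+\; q^2\,\qbinom{r-1}{2}.
\]

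To finish, I would invoke the standard $q$-Pascal identity
\[
\qbinom{r}{2} \;=\; \qbinom{r-1}{1} \;+\; q^2\,\qbinom{r-1}{2},
\]
which is a routine manipulation of the closed form $\qbinom{r}{k}=\tfrac{(q^r-1)(q^{r-1}-1)\cdots(q^{r-k+1}-1)}{(q^k-1)(q^{k-1}-1)\cdots(q-1)}$. Since there is no genuine obstacle here (both input bounds are already proved and the $q$-binomial identity is classical), the only thing to be careful about is that $e$ is a nonloop so that $M\con e$ is defined with rank $r-1$; any simple rank-$\ge 1$ matroid contains a nonloop, so this is immediate. The induction closes and the theorem follows.
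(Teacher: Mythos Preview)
Your proof is correct and follows essentially the same route as the paper's: induct on $r$, apply Lemma~\ref{contract} at a nonloop $e$, bound $W_1(M\con e)$ by Theorem~\ref{kung}, bound each $W_2^e(M|P)$ by $q^2$ via Theorem~\ref{rank3}, bound $|\cP_M(e)|=W_2(M\con e)$ by induction, and close with the $q$-Pascal identity. The only cosmetic difference is that the paper starts the induction at $r\ge 3$ rather than treating $r=3$ as a separate base case.
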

	\begin{proof}
		If $r \le 2$ then the result is obvious. Suppose inductively that $r \ge 3$ and that the result holds for smaller $r$, and let $e$ be a nonloop of $M$. By Theorem~\ref{kung} we have $W_1(M \con e) \le \frac{q^{r-1}-1}{q-1}$ and by Theorem~\ref{rank3} we have $W_2^e(M|P) \le q^2$ for each $P \in \cP_M(e)$. Therefore, by Lemma~\ref{contract} and the inductive hypothesis, 
		\begin{align*}
			W_2(M) &=  W_1(M \con e) + \sum_{P \in \cP_M(e)}W_2^e(M|P)\\
				 &\le \tfrac{q^{r-1}-1}{q-1} + q^2|\cP_M(e)| \\
				 &= \tfrac{q^{r-1}-1}{q-1} + q^2 W_2(M \con e) \\
				 &\le \tqbinom{r-1}{1} + q^2 \tqbinom{r-1}{2} \\
				 & = \tqbinom{r}{2},\\
		\end{align*} 
		as required. 
		
	\end{proof}

\section{References}

\newcounter{refs}

\begin{list}{[\arabic{refs}]}
{\usecounter{refs}\setlength{\leftmargin}{10mm}\setlength{\itemsep}{0mm}}

\item\label{gn}
J. Geelen, P. Nelson, 
The number of points in a matroid with no $n$-point line as a minor, 
J. Combin. Theory. Ser. B 100 (2010), 625--630.

\item\label{kungextremal}
J.P.S. Kung,
Extremal matroid theory, in: Graph Structure Theory (Seattle WA, 1991), 
Contemporary Mathematics 147 (1993), American Mathematical Society, Providence RI, ~21--61.

\item\label{nelson}
P. Nelson, 
The number of rank-$k$ flats in a matroid with no $U_{2,n}$-minor,
arXiv:1306.0531 [math.CO]

\item \label{oxley}
J. G. Oxley, 
Matroid Theory,
Oxford University Press, New York, 2011.

\end{list}
\end{document}